\newtheorem{theorem}{Theorem}
\newtheorem{lemma}{Lemma}
\newtheorem{proposition}{Proposition}
\newtheorem{definition}{Definition}
\newtheorem{remark}{Remark}
\numberwithin{equation}{section}
\begin{document}

\title{Equality in some symplectic eigenvalue inequalities}
\titlemark{Equality in some symplectic eigenvalue inequalities}



\emsauthor{1}{
	\givenname{Hemant K.}
	\surname{Mishra}
	\mrid{1392358}
	\orcid{0000-0001-9823-9025}}{Hemant K.~Mishra}

\Emsaffil{1}{
	\department{School of Electrical and Computer Engineering}
	\organisation{Cornell University}
	\rorid{05bnh6r87}
	\address{}
	\zip{14850}
	\city{Ithaca}
	\country{USA}
	\affemail{hemant.mishra@cornell.edu}}

\classification[15B48, 15A18]{15A42}

\keywords{Weyl's inequalities, Lidskii's inequalities, Schur--Horn theorem, positive definite matrix, symplectic matrix, symplectic eigenvalue, eigenvalue equality}

\begin{abstract}
    In the last decade, numerous works have investigated several properties of symplectic eigenvalues. 
    Remarkably, the results on symplectic eigenvalues have been found to be analogous to those of eigenvalues of Hermitian matrices with appropriate interpretations. 
    In particular, symplectic analogs of famous eigenvalue inequalities are known today such as Weyl's inequalities, Lidskii's inequalities, and Schur--Horn majorization inequalities. 
    In this paper, we provide necessary and sufficient conditions for equality in the symplectic analogs of the aforementioned inequalities. 
\end{abstract}

\maketitle


\section{Introduction}
    Let $\mathbb{M}_n(\mathbb{R})$ be the set of $n \times n$ real matrices, and $\mathbb{P}_n(\mathbb{R})$ denote the subset of $\mathbb{M}_n(\mathbb{R})$ consisting of symmetric positive definite matrices. 
    Denote by $\operatorname{Sp}(2n)$ the real symplectic group defined as
    \begin{align}
        \operatorname{Sp}(2n) \coloneqq 
            \left\{
                M \in \mathbb{M}_{2n}(\mathbb{R}): M^T J_{2n} M=J_{2n}
            \right\},
    \end{align}
    where  
    $J_{2n} \coloneqq  
    \left(
            \begin{smallmatrix}
                0 & 1 \\ 
                -1 & 0
            \end{smallmatrix} 
    \right) \otimes I_n$,
    $I_n$ being the $n \times n$ identity matrix. 
    Given $A \in \mathbb{P}_{2n}(\mathbb{R})$, there exists $M \in \operatorname{Sp}(2n)$ such that
    \begin{align}\label{williamson_decomposition}
        M^T A M = D \oplus D,
    \end{align}
    where $D$ is an $n\times n$ diagonal matrix with positive diagonal entries $d_1(A) \leq  \cdots \leq d_n(A)$, called the symplectic eigenvalues of $A$.
    This result is known as Williamson's theorem  \cite{williamson1936algebraic}. Several elementary proofs of Williamson's theorem are known today \cite{folland1989harmonic, simon1999congruences, ikramov2018symplectic}.

    Symplectic eigenvalues are ubiquitous in many areas of mathematics and physics such as classical Hamiltonian dynamics \cite{arnold1989graduate}, quantum mechanics \cite{dms}, and symplectic topology \cite{degosson, hofer}. 
    It also plays a crucial role in continuous-variable quantum information theory \cite{serafini2017quantum}, especially in Gaussian quantum information theory \cite{adesso2004extremal, chen2005gaussian, p, nicacio2021williamson, hsiang2022entanglement}. These applications have led to a growing interest in symplectic eigenvalues amongst mathematicians and physicists.
    In the last decade, numerous works have investigated properties of symplectic eigenvalues \cite{bhatia2015symplectic, HIAI2018129, mishra2020first, bhatia2020schur, bhatia_jain_2021, jain2021sums, jm, paradan2022, mishra2023}. 
    The notion of symplectic eigenvalues has also been extended to positive linear operators on infinite dimensional separable Hilbert spaces \cite{bhat2019real}, and some properties similar to the finite dimensional case are established \cite{john2022interlacing}. Remarkably, the results on symplectic eigenvalues are found to be analogous to those of eigenvalues of Hermitian matrices with appropriate interpretations. 
    In particular, symplectic analogs of famous eigenvalue inequalities are known today such as Weyl's inequalities \cite{bhatia_jain_2021}, Lidskii's inequalities \cite{jm, jain2021sums}, and Schur--Horn majorization inequalities \cite{bhatia2020schur}. 
 
    The goal of this paper is to provide necessary and sufficient conditions for equality in the symplectic analogs of the Weyl's, Lidskii's, and Schur--Horn majorization inequalities.  
    Interestingly, the conditions obtained for the symplectic Weyl's inequalities turn out to be analogous to its counterpart in the context of eigenvalues of Hermitian matrices obtained by Massey~et~al.~\cite{massey2014optimal}. 
    We also obtain a weaker symplectic analog of the necessary and sufficient conditions, obtained by Friedland~\cite{friedland2015equality}, for the equality in Lidskii's inequalities which serves as a set of necessary conditions for the equality to hold in symplectic Lidskii's inequalities.

    The organization of the paper is as follows. 
    We begin by reviewing a basic theory of symplectic eigenvalues and symplectic matrices in Section~\ref{sec:background}. 
    In Section~\ref{sec:symplectic_weyl_equality}, we derive precise conditions for equality in the symplectic analog of Weyl's inequalities $(\text{Theorem}~\ref{thm:symp_weyl_equalities})$. 
    In Section~\ref{sec:symplectic_lidskii_equalities}, we first provide necessary and sufficient conditions for equality in symplectic Lidskii's inequalities (Theorem~\ref{thm:equality-sym-lidskii-inequalities}). 
    We then establish some interesting properties of \textit{symplectic subspaces} associated to a given positive definite matrix (Propositions~\ref{prop:inv_sym_subspace_property},~\ref{prop:piecewise-analyticity-sym-subspace}). 
    Lastly, we establish a new set of necessary conditions for equality to hold in the symplectic Lidskii's inequalities (Theorem~\ref{thm:necessary-sym-lidskii}).
    In Section~\ref{sec:schur--horn}, we provide exact description of positive definite matrices saturating the weak supermajorization by majorization in a symplectic analog of the classic Schur--Horn theorem (Theorem~\ref{thm:sym_schur_horn}). 

 
\section{Background}\label{sec:background}
    We shall omit the subscript $2n$ in $J_{2n}$ whenever the size of the matrix is clear from the context.
    We call a pair of vectors $(u,v)$ \textit{ symplectically normalized} if it satisfies $\langle u, Jv\rangle=1$, where $\langle \cdot, \cdot \rangle$ denotes the Euclidean inner product. Two pairs of vectors $(u_1, v_1)$, $(u_2, v_2)$ are called \textit{ symplectically orthogonal} to each other if
    \begin{align}
        \langle u_i, J v_j \rangle = \langle u_i, J u_j \rangle = \langle v_i, J v_j \rangle = 0
    \end{align}
    for all $i \neq j$, $i,j=1,2$.
    A subset $\{u_1,\ldots, u_k, v_1,\ldots, v_k\}$ of $\mathbb{R}^{2n}$ is called \textit{ symplectically orthogonal} if the pairs of vectors $(u_1, v_1),\ldots, (u_k, v_k)$ are symplectically orthogonal to each other. 
    The subset is called \textit{ symplectically orthonormal} if it is symplectically orthogonal and the pairs of vectors $(u_1, v_1),\ldots, (u_k, v_k)$ are normalized. 
    A symplectically orthonormal subset of $\mathbb{R}^{2n}$ consisting of $2n$ vectors is called a \textit{ symplectic basis} of $\mathbb{R}^{2n}$. 
    There is a one-to-one correspondence between symplectic bases of $\mathbb{R}^{2n}$ and the symplectic group $\operatorname{Sp}(2n)$---symplectic bases $\{u_1,\ldots, u_n, v_1,\ldots, v_n \}$ of $\mathbb{R}^{2n}$ correspond to the matrices $[u_1,\ldots, u_n, v_1,\ldots, v_n]$ in $\operatorname{Sp}(2n)$. 
    The symplectic group is closed under matrix transpose and inverse, and every matrix in the symplectic group is called a symplectic matrix.
    See \cite{folland1989harmonic, degosson} for a review of symplectic linear algebra.
    We denote by $\operatorname{Sp}(2n, 2k)$ the set of $2n \times 2k$ real  matrices $M$ which satisfy $M^T J_{2n}M=J_{2k}$; in particular, $\operatorname{Sp}(2n,2n)=\operatorname{Sp}(2n)$. 
    Denote by $\operatorname{OrSp}(2n)$ the set of $2n \times 2n$ real \textit{ orthosymplectic} (orthogonal and symplectic) matrices.  
    Given any matrix $R \in \mathbb{M}_n(\mathbb{R})$, let $\operatorname{Ran}(R)$ denote the vector subspace of $\mathbb{R}^{n}$ spanned by the columns of $R$.

    Williamson's theorem can be equivalently stated as follows \cite[Proposition~2.1]{jm}: 
    for $A \in \mathbb{P}_{2n}(\mathbb{R})$ there exists a symplectic basis $\{u_1,\ldots, u_n, v_1, \ldots, v_n\}$ of $\mathbb{R}^{2n}$ satisfying
    \begin{align}\label{williamson_alternate_form}
        A u_i = d_i(A) J v_i, \quad Av_i = -d_i(A) J u_i, \qquad \text{for } i=1,\ldots, n.
    \end{align}
    We call a pair of non-zero vectors $(u_i, v_i)$ satisfying \eqref{williamson_alternate_form} a symplectic eigenvector pair of $A$ corresponding to the symplectic eigenvalue $d_i(A)$. 
    In addition, if the pair $(u_i, v_i)$ is normalized, it is called a normalized symplectic eigenvector pair of $A$ corresponding to $d_i(A)$. 
    We call a symplectic basis $\{u_1,\ldots, u_n, v_1,\ldots, v_n\}$ of $\mathbb{R}^{2n}$ satisfying \eqref{williamson_alternate_form} a \textit{ symplectic eigenbasis} of $A$. 

    A subspace $\mathscr{U}$ of $\mathbb{R}^{2n}$ is called a symplectic subspace of $\mathbb{R}^{2n}$ if for any $u \in \mathscr{U}$ there exists $v \in \mathscr{U}$ such that $\langle u, Jv \rangle \neq 0$. 
    Every symplectic subspace $\mathscr{U}$ of $\mathbb{R}^{2n}$ has a symplectically orthonormal basis, which we simply call a symplectic basis of $\mathscr{U}$. 
    Also, the span of any symplectically orthonormal subset of $\mathbb{R}^{2n}$ is a symplectic subspace of $\mathbb{R}^{2n}$. Consequently, every symplectic subspace of $\mathbb{R}^{2n}$ is of even dimension.
    See \cite[Section~1.2.1]{degosson}.


\section{Equality in symplectic Weyl's inequalities}\label{sec:symplectic_weyl_equality}
    Let $\mathbb{H}_n(\mathbb{C})$ denote the set of $n \times n$ complex Hermitian matrices. 
    For $X \in \mathbb{H}_n(\mathbb{C})$, let $\lambda_1(X) \geq \cdots \geq \lambda_n(X)$ denote the eigenvalues of $X$ in the descending order.
    The famous Weyl's  eigenvalue inequalities \cite{weyl_inequalities} for Hermitian matrices state that 
    for any $X,Y \in \mathbb{H}_n(\mathbb{C})$ and indices $i,j \in \{1,\ldots, n\}$ such that $i+j-1 \leq n$, we have
    \begin{align}\label{weyl_inequalities}
	\lambda_{i+j-1}(X+Y) \leq \lambda_i(X) + \lambda_{j}(Y).
    \end{align}
    See Section~III.2 of \cite{ma_bhatia}.
    In the work of Massey~et al.~\cite{massey2014optimal}, it was shown that the equality in \eqref{weyl_inequalities} holds if and only if there is a common eigenvector of $X$, $Y$, and $X+Y$ corresponding to their eigenvalues $\lambda_i(X)$, $\lambda_{j}(Y)$, and $\lambda_{i+j-1}(X+Y)$.

    A symplectic analog of Weyl's inequalities was recently given by Bhatia and Jain~\cite{bhatia_jain_2021}, which states that for $A, B \in \mathbb{P}_{2n}(\mathbb{R})$ and indices $i,j \in \{1, \ldots, n\}$ satisfying $i+j-1 \leq n$,  we have
    \begin{align}\label{symp_weyl_inequalities}
	d_{i+j-1}(A+B) \geq d_i(A) + d_j(B).
    \end{align}
    In this section, we derive a necessary and sufficient condition for the equality to hold in \eqref{symp_weyl_inequalities}. 
    Interestingly, the condition obtained is analogous to that of the eigenvalues.

    The following observation will be useful in proving the main result of the section.
\begin{proposition}\label{prop:span_symp_eigenvectors}
    Let $A \in \mathbb{P}_{2n}(\mathbb{R})$ and let $d>0$ be a symplectic eigenvalue of $A$. 
    Suppose $(u_1,v_1),\ldots, (u_k, v_k)$ are normalized symplectic eigenvector pairs of $A$ corresponding to the common symplectic eigenvalue $d$. 
    Any non-zero pair of vectors $(u,v)$ of the form $u = \sum_{\ell=1}^k (\alpha_\ell u_\ell+\beta_\ell v_\ell)$ and $v=\sum_{\ell=1}^k (-\beta_\ell u_\ell+\alpha_\ell v_\ell)$ is also a symplectic eigenvector pair of $A$ corresponding to $d$.
\end{proposition}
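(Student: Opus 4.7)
The plan is to verify the claim by a direct linear-algebraic computation using the defining relations \eqref{williamson_alternate_form} satisfied by each pair $(u_\ell, v_\ell)$, namely $A u_\ell = d\,J v_\ell$ and $A v_\ell = -d\,J u_\ell$. The construction of $(u,v)$ from the $(u_\ell, v_\ell)$ is a very specific combination: the coefficients $(\alpha_\ell, \beta_\ell)$ are rotated into $(-\beta_\ell, \alpha_\ell)$ when forming $v$ from the same underlying vectors, which is exactly the pattern that will swap $J v_\ell \leftrightarrow -J u_\ell$ in a compatible way when we apply $A$. So I expect the verification to be clean and essentially algorithmic.

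First, I would compute $A u$ by expanding and using linearity together with the eigenvector relations:
\begin{align*}
A u &= \sum_{\ell=1}^{k} \bigl(\alpha_\ell A u_\ell + \beta_\ell A v_\ell\bigr) = \sum_{\ell=1}^{k} \bigl(\alpha_\ell d\, J v_\ell - \beta_\ell d\, J u_\ell\bigr) = d\, J \sum_{\ell=1}^{k}\bigl(-\beta_\ell u_\ell + \alpha_\ell v_\ell\bigr) = d\, J v.
\end{align*}
An entirely analogous computation yields
\begin{align*}
A v = \sum_{\ell=1}^{k}\bigl(-\beta_\ell A u_\ell + \alpha_\ell A v_\ell\bigr) = -d\, J \sum_{\ell=1}^{k}\bigl(\alpha_\ell u_\ell + \beta_\ell v_\ell\bigr) = -d\, J u.
\end{align*}
This shows that $(u,v)$ satisfies the defining relations \eqref{williamson_alternate_form} for the symplectic eigenvalue $d$.

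The only remaining point is to confirm that the pair genuinely qualifies as a symplectic eigenvector pair, which the paper defines to consist of non-zero vectors. Since $J$ is invertible, the identity $Au = d\, J v$ together with $d>0$ forces $u = 0 \iff v = 0$. Hence the hypothesis that $(u,v)$ is non-zero in fact guarantees that both $u$ and $v$ are non-zero, completing the verification. The argument is a direct computation; there is no substantive obstacle, though the slight subtlety worth highlighting is the observation about invertibility of $J$ that rules out the degenerate case in which only one of $u, v$ vanishes.
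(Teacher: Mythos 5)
Your proof is correct and takes essentially the same approach as the paper: a direct expansion of $Au$ and $Av$ using the Williamson relations $Au_\ell = d\,Jv_\ell$, $Av_\ell = -d\,Ju_\ell$. The only difference is your added remark that invertibility of $J$ forces $u=0 \iff v=0$, which the paper leaves implicit but which is a reasonable clarification.
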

\begin{proof}
    Let $(u,v)$ be a pair of non-zero vectors in the given form. 
    Using the equivalent form of Williamson's theorem \eqref{williamson_alternate_form}, we get
    \begin{align}
        Au 	&= \sum_{\ell=1}^{k} (\alpha_\ell Au_\ell+\beta_\ell Av_\ell) \\
        		&= d J \sum_{\ell=1}^{k} (-\beta_\ell u_\ell+ \alpha_\ell v_\ell)  \\
        		&= d J v.
    \end{align}
    Similarly, we also get $Av=-dJu$.
\end{proof}

\begin{theorem}\label{thm:symp_weyl_equalities}
    Let $A, B \in \mathbb{P}_{2n}(\mathbb{R})$ and $i,j \in \{1,\ldots, n\}$ be any indices such that $i+j-1 \leq n$. 
    Then the equality holds in \eqref{symp_weyl_inequalities}, i.e.,
    \begin{equation}\label{symp_weyl_equalities}
        d_{i+j-1}(A+B) = d_i(A) + d_j(B),
    \end{equation}
    if and only if there exists a common normalized symplectic eigenvector pair of $A$, $B$, and $A+B$ corresponding to their symplectic eigenvalues $d_i(A)$, $d_j(B)$, and $d_{i+j-1}(A+B)$. 
\end{theorem}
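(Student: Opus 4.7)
For the sufficiency direction, suppose $(u,v)$ is a common normalized symplectic eigenvector pair of $A$, $B$, and $A+B$ corresponding to $d_i(A)$, $d_j(B)$, and $d_{i+j-1}(A+B)$, respectively. Then $(A+B)u = Au+Bu = (d_i(A)+d_j(B))Jv$, which by the $A+B$-part of the hypothesis also equals $d_{i+j-1}(A+B)Jv$. Since $v\neq 0$ and $J$ is invertible, equation \eqref{symp_weyl_equalities} follows at once.

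For necessity, my plan is to emulate the spectral-intersection argument of Massey et al.\ for Hermitian matrices, modified to respect the symplectic structure. Let $\mathscr{U}_A$ denote the symplectic subspace spanned by the symplectic eigenvector pairs of $A$ for $d_i(A),\ldots,d_n(A)$ (of dimension $2(n-i+1)$), let $\mathscr{V}_B$ be the analogous object for $B$ and $d_j(B),\ldots,d_n(B)$ (of dimension $2(n-j+1)$), and let $\mathscr{W}_{A+B}$ be the symplectic subspace spanned by the symplectic eigenvector pairs of $A+B$ for $d_1(A+B),\ldots,d_{i+j-1}(A+B)$ (of dimension $2(i+j-1)$). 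The main technical ingredient I plan to establish is a Rayleigh-type lemma: for any symplectically normalized pair $(u,v)$ lying in $\mathscr{U}_A$,
\[
\tfrac{1}{2}\bigl(\langle u,Au\rangle+\langle v,Av\rangle\bigr)\geq d_i(A),
\]
with equality if and only if $(u,v)$ is a symplectic eigenvector pair of $A$ for $d_i(A)$. The proof will expand $(u,v)$ in a symplectic eigenbasis of $A$ and combine AM--GM with the Hadamard-type inequality $|\alpha_\ell\delta_\ell-\beta_\ell\gamma_\ell|\leq\sqrt{(\alpha_\ell^2+\beta_\ell^2)(\gamma_\ell^2+\delta_\ell^2)}$; the equality analysis, together with Proposition~\ref{prop:span_symp_eigenvectors}, will pin down the eigenpair structure. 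Analogous lemmas furnish the lower bound $d_j(B)$ on $\mathscr{V}_B$ and the reversed upper bound $d_{i+j-1}(A+B)$ on $\mathscr{W}_{A+B}$.

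With the Rayleigh lemmas in hand, a simple dimension count gives
\[
\dim\mathscr{U}_A+\dim\mathscr{V}_B+\dim\mathscr{W}_{A+B}=4n+2,
\]
so the triple intersection $V=\mathscr{U}_A\cap\mathscr{V}_B\cap\mathscr{W}_{A+B}$ has dimension at least $2$ in $\mathbb{R}^{2n}$. Picking a symplectically normalized pair $(u,v)$ in $V$ and summing the three Rayleigh bounds yields
\[
d_i(A)+d_j(B)\leq \tfrac{1}{2}\bigl(\langle u,(A+B)u\rangle+\langle v,(A+B)v\rangle\bigr)\leq d_{i+j-1}(A+B)=d_i(A)+d_j(B),
\]
forcing equality in each Rayleigh bound; the equality clauses then exhibit $(u,v)$ as a common symplectic eigenvector pair with the required eigenvalues. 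The main obstacle I anticipate is precisely the availability of a symplectically normalized pair inside $V$: while the dimension count secures $\dim V\geq 2$, one must rule out the pathological possibility that $V$ is totally isotropic with respect to $\langle\cdot,J\cdot\rangle$. I expect to handle this either by a perturbation argument---reducing to $A$, $B$ with simple symplectic spectra, where the triple intersection collapses to a $2$-dimensional symplectic subspace, and then passing to the limit---or by a direct argument showing that total isotropy of $V$ is incompatible with the equality hypothesis.
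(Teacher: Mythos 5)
Your sufficiency argument is correct and clean (slightly different from the paper, which verifies the numerical identity via the Rayleigh formula $d = \tfrac{1}{2}(\langle u,\cdot\,u\rangle+\langle v,\cdot\,v\rangle)$, but your direct computation $(A+B)u=(d_i(A)+d_j(B))Jv=d_{i+j-1}(A+B)Jv$ is equally valid).

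The necessity direction, however, has a genuine gap, and it is exactly the one you flag at the end. You want a symplectically normalized pair $(u,v)$ with \emph{both} $u$ and $v$ inside the triple intersection $V=\mathscr{U}_A\cap\mathscr{V}_B\cap\mathscr{W}_{A+B}$, and the dimension count only guarantees $\dim V\geq 2$. A $2$-dimensional subspace of a symplectic space can perfectly well be Lagrangian (totally isotropic), and I do not see a way to rule this out from the equality hypothesis alone. Your two proposed escapes are not reassuring: perturbing $A,B$ to generic (simple) symplectic spectra will in general destroy the equality $d_{i+j-1}(A+B)=d_i(A)+d_j(B)$, so you would have to perturb \emph{within} the equality locus, which is not obviously possible; and the proposed ``direct argument'' that isotropy of $V$ contradicts the equality is not supplied.

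The paper avoids this obstacle by demanding much less of the companion vector. The variational lower bounds underlying the symplectic Weyl inequality (Propositions~3.2 and~3.3 of \cite{bhatia_jain_2021}) only require the \emph{first} vector $x$ of a symplectically normalized pair $(x,x')$ to lie in the span of the higher symplectic eigenvector pairs of $A$ (respectively $B$); only the upper bound for $A+B$ needs both $x$ and $x'$ in $\mathscr{W}_{A+B}$. So the paper takes a nonzero $x$ in the triple intersection (the dimension count you used already gives this), expands it as $x=\sum_{\ell\leq i+j-1}(\alpha_\ell y_\ell+\beta_\ell z_\ell)$ in the symplectic eigenbasis of $A+B$, normalizes so that $\sum_\ell(\alpha_\ell^2+\beta_\ell^2)=1$, and sets $x'=\sum_{\ell\leq i+j-1}(-\beta_\ell y_\ell+\alpha_\ell z_\ell)$. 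By construction $\langle x,Jx'\rangle=1$ and $x'\in\mathscr{W}_{A+B}$, but $x'$ is \emph{not} required to lie in $\mathscr{U}_A$ or $\mathscr{V}_B$. All three Rayleigh bounds then apply, the equality hypothesis forces equality in each, and the equality analysis (expanding $x'$ in the eigenbases of $A$ and $B$ and using Proposition~\ref{prop:span_symp_eigenvectors}) shows a posteriori that $(x,x')$ is a common normalized symplectic eigenvector pair. I suggest you replace the step ``pick a symplectically normalized pair in $V$'' with this construction, and verify the precise hypotheses of the Bhatia--Jain variational propositions, since the asymmetry between $x$ and $x'$ is what makes the argument go through.
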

\begin{proof}
    The ``if'' direction is straightforward. Indeed, assume that there exists a common normalized symplectic eigenvector pair $(u,v)$ of $A$, $B$, and $A+B$ corresponding to their symplectic eigenvalues $d_i(A)$, $d_j(B)$, and $d_{i+j-1}(A+B)$. 
    From the equivalent formulation of Williamson's theorem \eqref{williamson_alternate_form}, we get
    \begin{align}
    d_{i+j-1}(A+B) 
    	&= \dfrac{\langle u, (A+B) u \rangle + \langle v, (A+B) v \rangle}{2}   \\
    	&= \dfrac{\langle u, A u \rangle + \langle v, A v \rangle}{2} + \dfrac{\langle u, B u \rangle + \langle v, B v \rangle}{2}   \\
    	&= d_{i}(A) + d_j(B).
    \end{align}

    The ``only if" direction is more interesting.
    \sloppy Fix symplectic eigenbases $\{u_1,\ldots, u_n, v_1,\ldots, v_n\}$, $\{w_1,\ldots, w_n, x_1,\ldots, x_n\}$, and $\{y_1,\ldots, y_n, z_1,\ldots, z_n\}$ of $A$, $B$, and $A+B$, respectively. 
    From the proof of Theorem~1.2 of \cite{bhatia_jain_2021}, we get a pair of non-zero vectors $(x, x')$ of the form
    \begin{align}
    x &= \sum_{\ell=1}^{i+j-1} (\alpha_\ell y_\ell + \beta_\ell z_\ell), \label{eq:vector_x} \\ 
    x' &= \sum_{\ell=1}^{i+j-1} (-\beta_\ell y_\ell + \alpha_\ell z_\ell),\label{eq:vector_x'}
    \end{align}
    such that
    \begin{align}
    \langle x, J x' \rangle = \sum_{\ell=1}^{i+j-1} \left(\alpha_\ell^2 + \beta_\ell^2 \right) =1,\label{eq:symplectic_normalization}
    \end{align}
    and $x$ lies in the intersection of the vector subspaces spanned by 
    $\{u_i,\ldots, u_n, v_i,\ldots, v_n\}$, $\{w_j,\ldots, w_n, x_j,\ldots, x_n\},$ and $\{y_1,\ldots, y_{i+j-1}, z_1,\ldots, z_{i+j-1}\}$.
    By Propositions~3.2 and 3.3 of \cite{bhatia_jain_2021}, we have
    \begin{align}
    d_{i+j-1}(A+B) 
        	&\geq \dfrac{\langle x, (A+B)x \rangle + \langle x^{\prime}, (A+B)x^{\prime} \rangle}{2},\label{eqn13} \\
    		\dfrac{\langle x, Ax \rangle + \langle x^{\prime}, Ax^{\prime} \rangle}{2} 
        	&\geq d_{i}(A), \label{eqn6}\\  
    		\dfrac{\langle x, Bx \rangle + \langle x^{\prime}, Bx^{\prime} \rangle}{2}
        	&\geq d_{j}(B).\label{eqn7}  
    \end{align}

    Assume that the equality \eqref{symp_weyl_equalities} holds. 
    This implies that each of the inequalities \eqref{eqn13}, \eqref{eqn6}, and \eqref{eqn7} is equality; i.e., we have
    \begin{align}
    d_{i+j-1}(A+B) 	&= \dfrac{\langle x, (A+B)x \rangle + \langle x^{\prime}, (A+B)x^{\prime} \rangle}{2}, \label{equality:a+b} \\
    d_{i}(A) 			&= \dfrac{\langle x, Ax \rangle + \langle x^{\prime}, Ax^{\prime} \rangle}{2}, \label{equality:a} \\
    d_{j}(B) 			&= \dfrac{\langle x, Bx \rangle + \langle x^{\prime}, Bx^{\prime} \rangle}{2}. \label{equality:b}
    \end{align}
    From the equivalent formulation of Williamson's theorem \eqref{williamson_alternate_form}, and the descriptions \eqref{eq:vector_x} and \eqref{eq:vector_x'} of $(x,x')$, we get
    \begin{align}
    	\dfrac{\langle x, (A+B)x \rangle + \langle x^{\prime}, (A+B)x^{\prime} \rangle}{2} 
		= \sum_{\ell=1}^{i+j-1} d_\ell(A+B) \left(\alpha_\ell^2+\beta_\ell^2\right).\label{eq:equality a+b-aux}
    \end{align}
    From \eqref{equality:a+b} and \eqref{eq:equality a+b-aux}, we thus get
    \begin{align}
    	\sum_{\ell=1}^{i+j-1} d_\ell(A+B) \left(\alpha_\ell^2+\beta_\ell^2\right) &=d_{i+j-1}(A+B).
    \end{align}
    Using the condition \eqref{eq:symplectic_normalization} in the above equation, we get $\alpha_\ell=0$ and $\beta_\ell=0$ whenever $d_\ell(A+B)< d_{i+j-1}(A+B)$. 
    So, Proposition~\ref{prop:span_symp_eigenvectors} implies that $(x, x')$ is a normalized symplectic eigenvector pair of $A+B$ corresponding to the symplectic eigenvalue $d_{i+j-1}(A+B)$.

    In the remainder of the proof, we argue that $(x,x')$ is also a normalized symplectic eigenvector pair of $A$ and $B$ corresponding to their symplectic eigenvalues $d_i(A)$ and $d_j(B)$.
    We can express the vectors $x$ and $x^\prime$ in the symplectic eigenbasis of $A$ as
    \begin{align}
    x &= \sum_{\ell=i}^{n} \left(\gamma_\ell u_\ell + \delta_\ell v_\ell \right), \\ 
    x' &= \sum_{\ell=1}^{n} (\gamma_\ell' u_\ell + \delta_\ell' v_\ell),
    \end{align}
    where we used the fact that $x$ lies in the span of $\{u_i,\ldots, u_n, v_i, \ldots, v_n\}$.
    This gives
    \begin{align}
    	&\dfrac{\langle x, Ax \rangle + \langle x^{\prime}, Ax^{\prime} \rangle}{2} \nonumber \\
        	&\hspace{0.5cm}= \dfrac{1}{2}\sum_{\ell=i}^{n} d_\ell(A) \left(\gamma^2_{\ell}+\delta^2_{\ell}+(\gamma^{\prime}			_{\ell})^2+(\delta^{\prime}_{\ell})^2\right)  +  \dfrac{1}{2}\sum_{\ell=1}^{i-1} d_\ell(A)\left((\gamma^{\prime}			_{\ell})^2+(\delta^{\prime}_{\ell})^2\right) \label{eq:2di-a-1}  \\
        	&\hspace{0.5cm} \geq  \dfrac{d_i(A)}{2}\sum_{\ell=i}^{n} \left(\gamma^2_{\ell}+\delta^2_{\ell}+(\gamma^{\prime}			_{\ell})^2+(\delta^{\prime}_{\ell})^2\right) + \dfrac{1}{2}\sum_{\ell=1}^{i-1} d_\ell(A)\left((\gamma^{\prime}				_{\ell})^2+(\delta^{\prime}_{\ell})^2\right)  \label{eqn16}  \\
        	&\hspace{0.5cm} \geq d_i(A) \sum_{\ell=i}^{n} \left(\gamma_\ell \delta^{\prime}_\ell - \delta_\ell \gamma^{\prime}			_\ell\right) + \dfrac{1}{2}\sum_{\ell=1}^{i-1} d_\ell(A)\left((\gamma^{\prime}_{\ell})^2+(\delta^{\prime}				_{\ell})^2\right)  \label{eqn17}\\
        	&\hspace{0.5cm} = d_i(A) \big\langle x, Jx' \big\rangle + \dfrac{1}{2} \sum_{\ell=1}^{i-1} d_\ell(A)							\left((\gamma^{\prime}_{\ell})^2+(\delta^{\prime}_{\ell})^2\right)  \\
        	&\hspace{0.5cm} = d_i(A) + \dfrac{1}{2}\sum_{\ell=1}^{i-1} d_\ell(A)\left((\gamma^{\prime}_{\ell})^2+(\delta^{\prime}			_{\ell})^2\right) \label{eq:2di-a-2}.
    \end{align}
    The equality \eqref{equality:a} implies that each inequality in the development \eqref{eq:2di-a-1}--\eqref{eq:2di-a-2} is equality. 
    So, we must have $\gamma_\ell'=\delta_\ell'=0$ for $\ell=1,\ldots, i-1$. 
    Furthermore, \eqref{eqn16} being equality implies $\gamma_\ell=\delta_\ell=\gamma_\ell'=\delta_\ell'=0$ whenever $d_\ell(A)> d_i(A)$. 
    Also, \eqref{eqn17} being equality implies that $\gamma_\ell=\delta_\ell'$ and $\delta_\ell=-\gamma_\ell'$.
    It then follows from Proposition~\ref{prop:span_symp_eigenvectors} that $(x, x')$ is a normalized symplectic eigenvector pair of $A$ corresponding to the symplectic eigenvalue $d_i(A)$.
    A similar argument shows that $(x,x')$ is a normalized symplectic eigenvector pair of $B$ corresponding to the symplectic eigenvalue $d_j(B)$.
\end{proof}


\section{Equality in symplectic Lidskii's inequalities}\label{sec:symplectic_lidskii_equalities}
    The well-known Lidskii's inequalities \cite{lidskii1950characteristic, wielandt1955} for eigenvalues are given as follows: 
    for any $X,Y \in \mathbb{H}_n(\mathbb{C})$ and distinct indices $1 \leq i_1 < \cdots < i_k \leq n$, we have
    \begin{align}\label{eq:lidskii-inequality}
    \sum_{j=1}^{k} \lambda_{i_j}(X+Y) \leq \sum_{j=1}^{k} \lambda_{i_j}(X)+\sum_{j=1}^{k} \lambda_{j}(Y).
    \end{align}
    A symplectic analog of the Lidskii's inequalities was given by Jain and Mishra \cite{jm}, which states that,
    for any $A, B \in \mathbb{P}_{2n}(\mathbb{R})$ and $k$ distinct indices $1 \leq i_1 < \cdots < i_k \leq n$, we have
    \begin{align}\label{eq:sym-lidskii-inequality}
        \sum_{j=1}^{k} d_{i_j}(A+B) \geq \sum_{j=1}^{k} d_{i_j}(A)+\sum_{j=1}^{k} d_{j}(B).
    \end{align}
    The following theorem states necessary and sufficient conditions for the equality to hold in \eqref{eq:sym-lidskii-inequality}. 
    Part of the proof of the theorem is inspired by ideas presented in the proof of Theorem~3.1 of \cite{friedland2015equality}.
\begin{theorem}\label{thm:equality-sym-lidskii-inequalities}
    Let $A, B \in \mathbb{P}_{2n}(\mathbb{R})$ and $1 \leq i_1 < \cdots < i_k \leq n$ be any distinct indices. 
    The following statements are equivalent:
    \begin{itemize}
	\item [(i)] 
		The equality in \eqref{eq:sym-lidskii-inequality} holds, that is,
		\begin{align}\label{eq:sym-lidskii-equality}
    			\sum_{j=1}^{k} d_{i_j}(A+B) = \sum_{j=1}^{k} d_{i_j}(A)+\sum_{j=1}^{k} d_{j}(B).
		\end{align}
	\item [(ii)] 
		For all $t \in [0,1]$, we have
		\begin{align}\label{eq:sym:lidskii-linear}
          		\sum_{j=1}^{k} d_{i_j}(A+tB) = \sum_{j=1}^{k} d_{i_j}(A)+t\sum_{j=1}^{k} d_{j}(B).
		\end{align} 
	\item [(iii)] 
		There exist positive numbers $0=a_0<a_1<\cdots < a_r=1$ such that the following properties hold:
		for all $\ell \in \{1,\ldots, r\}$ and $t \in (a_{\ell-1}, a_\ell)$, there exists $M(t) \in \operatorname{Sp}(2n, 2k)$ 	such that the columns of $M(t)$ form symplectic eigenvector pairs of $A+t B$ corresponding to the symplectic eigenvalues $d_{i_1}(A+t B), \ldots, d_{i_k}(A+t B)$, and it satisfies 
		\begin{align}
           		\operatorname{Tr}\! \left[M(t)^T A M(t) \right]
				&= 2\sum_{j=1}^{k} d_{i_j}(A), \label{eq1:conditions-equality-sym-lidskii} \\
          		\operatorname{Tr}\!\left[M(t)^T B M(t) \right]
				&=2\sum_{j=1}^{k} d_{j}(B). \label{eq2:conditions-equality-sym-lidskii}
		\end{align} 
    \end{itemize}
\end{theorem}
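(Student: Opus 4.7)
The plan is to prove the three-way equivalence by establishing the cycle $(i) \Rightarrow (ii) \Rightarrow (iii) \Rightarrow (i)$, together with the trivial $(ii) \Rightarrow (i)$ obtained by setting $t=1$. The work divides into two easy links relying only on the symplectic Lidskii inequality \eqref{eq:sym-lidskii-inequality} and the equivalent form \eqref{williamson_alternate_form} of Williamson's theorem, and one hard link requiring piecewise real-analytic perturbation theory for symplectic eigenbases.

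For $(i) \Rightarrow (ii)$, I would fix $t \in [0,1]$ and apply \eqref{eq:sym-lidskii-inequality} twice: first to the decomposition $A + tB$ (using $d_j(sB) = s\, d_j(B)$ for $s \geq 0$) to get the lower bound
\begin{align*}
\sum_{j=1}^{k} d_{i_j}(A+tB) \geq \sum_{j=1}^{k} d_{i_j}(A) + t\sum_{j=1}^{k} d_j(B),
\end{align*}
and second to the decomposition $A+B = (A+tB) + (1-t)B$, which, combined with the assumption in $(i)$, forces the matching upper bound $\sum_j d_{i_j}(A+tB) \leq \sum_j d_{i_j}(A) + t\sum_j d_j(B)$. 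Together these pin \eqref{eq:sym:lidskii-linear}.

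For $(iii) \Rightarrow (i)$, I would pick $t$ in some open subinterval $(a_{\ell-1}, a_\ell)$. Since the columns of $M(t)$ form normalized symplectic eigenvector pairs of $A+tB$, a direct computation from \eqref{williamson_alternate_form} and the symplectic normalization yields $\operatorname{Tr}[M(t)^T(A+tB)M(t)] = 2\sum_j d_{i_j}(A+tB)$. Splitting the left-hand side as $\operatorname{Tr}[M(t)^T A M(t)] + t\operatorname{Tr}[M(t)^T B M(t)]$ and substituting \eqref{eq1:conditions-equality-sym-lidskii}--\eqref{eq2:conditions-equality-sym-lidskii} yields $\sum_j d_{i_j}(A+tB) = \sum_j d_{i_j}(A) + t\sum_j d_j(B)$ on that interval; by continuity of the symplectic eigenvalues in $t$ the identity extends to $t=1$, which is $(i)$.

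The main obstacle is the remaining direction $(ii) \Rightarrow (iii)$. I would take the partition $0 = a_0 < \cdots < a_r = 1$ to consist of the finitely many values of $t$ at which the multiplicity pattern of the symplectic spectrum of $A+tB$ changes, and on each $(a_{\ell-1}, a_\ell)$ invoke the piecewise real-analytic selection of a symplectic eigenbasis of $A+tB$ (Proposition~\ref{prop:piecewise-analyticity-sym-subspace}); the $k$ pairs indexed by $i_1, \ldots, i_k$ assemble into the desired $M(t) \in \operatorname{Sp}(2n, 2k)$. For the trace identity \eqref{eq2:conditions-equality-sym-lidskii} I would differentiate
\begin{align*}
\sum_{j=1}^{k} d_{i_j}(A+tB) = \tfrac{1}{2}\operatorname{Tr}[M(t)^T (A+tB) M(t)]
\end{align*}
on each open subinterval using the first-order perturbation formula for symplectic eigenvalues of \cite{mishra2020first}; under analytic selection the $M'(t)$ contributions cancel and the derivative reduces to $\tfrac{1}{2}\operatorname{Tr}[M(t)^T B M(t)]$, while by $(ii)$ it equals $\sum_j d_j(B)$. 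Condition \eqref{eq1:conditions-equality-sym-lidskii} then follows by subtracting $t$ times this from the preceding trace identity and using $(ii)$ once more. The delicate point throughout is the justification of the Hellmann--Feynman-type cancellation through eigenvalue crossings, which is exactly what the symplectic subspace theory developed later in the section (Propositions~\ref{prop:inv_sym_subspace_property} and \ref{prop:piecewise-analyticity-sym-subspace}) is designed to supply, so I would invoke those as black boxes.
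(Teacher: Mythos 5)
Your proposal is correct, and the step $(i)\Rightarrow(ii)$ takes a genuinely different and more elementary route than the paper. Where the paper derives $(i)\Rightarrow(ii)$ by combining piecewise analyticity of the eigenbasis (Theorem~4.7 of \cite{jm}), the first-order derivative formula $\phi'(t)=\tfrac12\operatorname{Tr}[M(t)^T B M(t)]$, the extremal trace bound $\tfrac12\operatorname{Tr}[M(t)^T B M(t)]\geq\sum_j d_j(B)$, and an integration-of-the-derivative argument, you obtain the same conclusion purely algebraically: the lower bound comes from \eqref{eq:sym-lidskii-inequality} applied to $A+tB$ together with the homogeneity $d_j(sB)=s\,d_j(B)$, and the matching upper bound comes from applying \eqref{eq:sym-lidskii-inequality} to the decomposition $A+B=(A+tB)+(1-t)B$ and plugging in the hypothesis $(i)$. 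This sandwich argument avoids all perturbation theory and is arguably cleaner; the paper's approach, however, simultaneously sets up the analytic machinery it then reuses for $(ii)\Leftrightarrow(iii)$, so neither is wasteful in its own context. Your $(ii)\Rightarrow(iii)$ and $(iii)\Rightarrow(i)$ steps match the paper's: the trace identities come from the Hellmann--Feynman-type derivative formula under $(ii)$, and $(iii)$ forces linearity of $\phi$ on each open subinterval, which extends to the endpoints by continuity.

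One small mis-citation to fix: the piecewise real-analytic selection of a symplectic eigenbasis of $A+tB$ is Theorem~4.7 of \cite{jm} (which the paper invokes), not Proposition~\ref{prop:piecewise-analyticity-sym-subspace} or Proposition~\ref{prop:inv_sym_subspace_property}. Those two propositions concern symplectic eigenvalues restricted to a fixed symplectic subspace and are developed later specifically for Theorem~\ref{thm:necessary-sym-lidskii}; they are not needed here. Likewise, the derivative formula used in the paper is Eq.~(5.8) of \cite{jm} rather than \cite{mishra2020first}, though either source supports the computation. Also note that in your $(i)\Rightarrow(ii)$ argument you should restrict the double application of \eqref{eq:sym-lidskii-inequality} to $t\in(0,1)$ so that both summands stay positive definite; the endpoint cases $t=0$ and $t=1$ of \eqref{eq:sym:lidskii-linear} are trivial.
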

\begin{proof}
    By Theorem~4.7 of \cite{jm}, the symplectic eigenvalue maps $d_i: [0,1] \to \mathbb{R}$ given by $d_i(t) \coloneqq d_i(A+tB)$  for $i=1,\ldots,n$ and $t \in [0,1]$ are piecewise analytic on $[0,1]$. 
    Furthermore, there exist piecewise analytic functions $u_i, v_i:[0,1] \to \mathbb{R}^{2n}$ for $i=1,\ldots,n$ such that $\{u_1(t),\ldots, u_n(t),v_1(t),\ldots, v_n(t)\}$ is a symplectic eigenbasis of $A+tB$ for $t \in [0,1]$. 
    Therefore, there exist finitely many positive numbers $0=a_0<a_1<\cdots < a_r=1$ such that $d_i,$ $u_i$, and $v_i$ are analytic on each open interval $(a_{\ell-1}, a_{\ell})$ for $i=1,\ldots,n$ and $\ell=1,\ldots, r$.
    Define $\phi:[0,1] \to \mathbb{R}$  as
    \begin{align}
        \phi(t)\coloneqq \sum_{j=1}^k d_{i_j}(A+tB).\label{eq:phi_t_def}
    \end{align}
    By differentiating \eqref{eq:phi_t_def} with respect to $t$ on $[0,1]\backslash \{a_0, a_1, \ldots, a_r\}$, and using Eq.~(5.8) of \cite{jm} for the derivative expressions of the symplectic eigenvalue maps, we get 
    \begin{align}
        \phi^\prime(t) = \dfrac{1}{2}  \operatorname{Tr}\! \left[ M (t)^T B M(t) \right],\label{eq:phi_derivative}
    \end{align}
    where $M(t) \coloneqq \left[ u_{i_1}(t), \ldots, u_{i_k}(t), v_{i_1}(t),\ldots,v_{i_k}(t) \right] \in \operatorname{Sp}(2n, 2k)$.
    We know from Theorem~5 of \cite{bhatia2015symplectic} that
    \begin{align}
        \dfrac{1}{2}  \operatorname{Tr}\! \left[ M (t)^T B M(t) \right] 
        		\geq \sum_{i=1}^k d_i(B).\label{eq:extremal_char_inequality}
    \end{align}
    Combining \eqref{eq:phi_derivative} and \eqref{eq:extremal_char_inequality} we thus get
    \begin{align}
        \phi^\prime(t) \geq \sum_{i=1}^k d_i(B), \qquad t \in [0,1]\backslash \{a_0, a_1, \ldots, a_r\}.\label{eq:phi-prime-inequality}
    \end{align}
    
    It trivially follows that $(ii) \Rightarrow (i)$. The following arguments show that $(i) \Rightarrow (ii)$. 
    Assume that the equality \eqref{eq:sym-lidskii-equality} holds. 
    By using the definition of $\phi$ in \eqref{eq:phi-prime-inequality}, we then get
    \begin{align}
        \phi^\prime(t) \geq \phi(1)-\phi(0), \qquad t \in [0,1]\backslash \{a_0, a_1, \ldots, a_r\}.\label{eq:phi_inequality_0_1}
    \end{align}
    The function $\phi$ is continuous on $[0,1]$, which follows from continuity of symplectic eigenvalues \cite{jm}. 
    From the fundamental theorem of calculus, we get
    \begin{align}\label{eq:phi-prime-integration-zero}
        \int_{0}^{1}  \phi^{\prime}(t)  \operatorname{d}\!t=\phi(1)-\phi(0).
    \end{align}
    Combining \eqref{eq:phi_inequality_0_1} and \eqref{eq:phi-prime-integration-zero}, we get
    \begin{align}
        \phi^{\prime}(t)=\phi(1)-\phi(0), \qquad t \in [0,1]\backslash \{a_0,a_1,\ldots, a_r\}. \label{eq:phi-prime-equality}
    \end{align}
    Since $\phi$ is continuous, it thus follows from \eqref{eq:phi-prime-equality} that
    \begin{align}\label{eq:phi-form}
        \phi(t)=\phi(0)+t (\phi(1)-\phi(0)), \qquad t \in [0,1],
    \end{align}
    implying \eqref{eq:sym:lidskii-linear} holds.
    This proves $(ii) \Rightarrow (i)$.
    We have thus established the equivalence $(i) \Leftrightarrow (ii)$.
    
    We now show that $(ii) \Rightarrow (iii)$. 
    Assume that $\eqref{eq:sym:lidskii-linear}$ holds.
    Let $t \in (a_{\ell-1}, a_{\ell})$ be arbitrary. 
    By construction, the columns of $M(t)$ form symplectic eigenvector pairs of $A+tB$ corresponding to its symplectic eigenvalues $d_{i_1}(A+t B),\ldots, d_{i_k}(A+t B)$.  
    We now argue that $M(t)$ satisfies the conditions \eqref{eq1:conditions-equality-sym-lidskii} and \eqref{eq2:conditions-equality-sym-lidskii}. 
    We have
    \begin{align}
        \operatorname{Tr}\left[M(t)^T B M(t) \right]  
        		&= 2\phi^{\prime}(t)   \\
        		&= 2\left(\phi(1)-\phi(0)\right) \\
        		&= 2\left(\sum_{j=1}^k d_{i_j}(A+B)-\sum_{j=1}^k d_{i_j}(A)\right)  \\
        		&= 2\sum_{j=1}^k d_j(B) \label{eq_v1:first-equality-condition-5}.
    \end{align}
    The first equality follows from \eqref{eq:phi_derivative}, the second equality follows from \eqref{eq:phi-prime-equality}, the third equality follows from the definition \eqref{eq:phi_t_def}, and the last equality follows by substituting $t=1$ into \eqref{eq:sym:lidskii-linear}.
    Also, we have
    \begin{align}\label{eq_v1:second-equality-condition-1}
        \operatorname{Tr}\left[M(t)^TAM(t) \right]
        		&=  \operatorname{Tr}\left[M(t)^T(A+tB) M(t) \right] - t \operatorname{Tr}\left[M(t)^TB M(t) \right].
    \end{align}
    Since the columns of $M(t)$ form symplectic eigenvector pairs of $A+tB$ corresponding to its symplectic eigenvalues $d_{i_1}(A+t B), \ldots, d_{i_k}(A+t B)$, the first term in the right-hand side of \eqref{eq_v1:second-equality-condition-1} is given by
    \begin{align}\label{eq_v1:second-equality-aux}
         \operatorname{Tr}\left[M(t)^T(A+t B)M(t) \right]= 2\sum_{j=1}^k d_{i_j}(A+t B).
    \end{align}
    Substituting \eqref{eq_v1:second-equality-aux} into \eqref{eq_v1:second-equality-condition-1} and using the relation \eqref{eq_v1:first-equality-condition-5}, we get
    \begin{align}
        \operatorname{Tr}\left[M(t)^TAM(t) \right]
        		&=  2\sum_{j=1}^k d_{i_j}(A+t B) -  2t\sum_{j=1}^k d_j(B).
    \end{align}
    Using the assumption \eqref{eq:sym:lidskii-linear}, we then get
    \begin{align}
        \operatorname{Tr}\left[M(t)^TAM(t) \right]
        		&=  2\sum_{j=1}^k d_{i_j}(A).
    \end{align}
    The other direction $(iii) \Rightarrow (ii)$ is rather straightforward. We have thus established $(ii) \Leftrightarrow (iii)$.
\end{proof}

    It was shown by Friedland~\cite{friedland2015equality} that the equality holds in the Lidskii's inequality \eqref{eq:lidskii-inequality} if and only if there exist $k$-dimensional subspaces $\mathscr{X}_1 ,\ldots, \mathscr{X}_r \subset \mathbb{C}^n$ and positive numbers $0=b_0< b_1<\cdots < b_r=1$ satisfying the following conditions:
    \begin{itemize}
        \item [(i)] The subspaces $\mathscr{X}_1 ,\ldots, \mathscr{X}_r$ are invariant under both $X$ and $Y$.
        \item [(ii)] Each subspace $\mathscr{X}_\ell$ for $\ell \in \{1,\ldots, r\}$ is spanned by $k$ orthonormal eigenvectors of $Y$ corresponding to its eigenvalues $\lambda_1(Y), \ldots , \lambda_k(Y)$.
        \item [(iii)] For each $\ell \in \{1,\ldots, r\}$ and $t\in [b_{\ell-1}, b_{\ell}]$, the eigenvalues of the restriction of $X+tY$ to $\mathscr{X}_\ell$ are $\lambda_{i_1}(X+tY), \ldots, \lambda_{i_k}(X+tY)$.
    \end{itemize}
    In the remainder of this section, we argue that a symplectic analog of the aforementioned conditions (with a weaker analog of condition $(iii)$) serves as a necessary condition for equality to hold in symplectic Lidskii's inequality \eqref{eq:sym-lidskii-inequality}.
    It is still open to determine if the given necessary condition for the equality to hold in symplectic Lidskii's inequality \eqref{eq:sym-lidskii-inequality} is sufficient as well.

    We begin by establishing some properties of symplectic eigenvalues related to symplectic subspaces of $\mathbb{R}^{2n}$ which are of independent interest. 
    It is well-known that every invariant subspace of a Hermitian matrix has an orthonormal basis of eigenvectors of the Hermitian matrix \cite{horn2012matrix}. The following is a symplectic analog of this fact. 
\begin{proposition}\label{prop:inv_sym_subspace_property}
    \sloppy    
    Let $A \in \mathbb{P}_{2n}(\mathbb{R})$ and $\mathscr{U}$ be a $2k$-dimensional symplectic subspace of $\mathbb{R}^{2n}$. 
    If $\mathscr{U}$ is an invariant subspace of $JA$ then there exists a symplectic basis $\{u_1,\ldots, u_k, v_1,\ldots, v_k\}$ of $\mathscr{U}$ such that $(u_1, v_1),\ldots, (u_k, v_k)$ are symplectic eigenvector pairs of $A$ corresponding to some symplectic eigenvalues $\gamma_1 \leq \cdots \leq \gamma_k$ of $A$. 
\end{proposition}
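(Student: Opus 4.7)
My plan is to apply Williamson's theorem to the compression of $A$ to $\mathscr{U}$ and then lift the resulting decomposition back to $\mathbb{R}^{2n}$ using the $JA$-invariance hypothesis. Fix any symplectic basis of $\mathscr{U}$ and assemble it into a matrix $M \in \operatorname{Sp}(2n,2k)$, so that $M^T J_{2n} M = J_{2k}$ and $\operatorname{Ran}(M) = \mathscr{U}$. The compression $A' \coloneqq M^T A M$ lies in $\mathbb{P}_{2k}(\mathbb{R})$, and applying Williamson's theorem in the equivalent form \eqref{williamson_alternate_form} to $A'$ produces a symplectic basis $\{a_1,\ldots, a_k, b_1,\ldots, b_k\}$ of $\mathbb{R}^{2k}$ together with symplectic eigenvalues $\gamma_1 \leq \cdots \leq \gamma_k$ of $A'$ satisfying $A' a_i = \gamma_i J_{2k} b_i$ and $A' b_i = -\gamma_i J_{2k} a_i$. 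Setting $u_i \coloneqq M a_i$ and $v_i \coloneqq M b_i$, the identity $M^T J_{2n} M = J_{2k}$ immediately ensures that $\{u_1,\ldots, u_k, v_1,\ldots, v_k\}$ is a symplectic basis of $\mathscr{U}$.

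The heart of the argument is promoting the compressed identity $M^T A u_i = A' a_i = \gamma_i J_{2k} b_i$ to the full identity $A u_i = \gamma_i J_{2n} v_i$ in $\mathbb{R}^{2n}$. This is where the $JA$-invariance of $\mathscr{U}$ enters: for any $u \in \mathscr{U}$ we have $JAu \in \mathscr{U}$, whence $Au = -J_{2n}(JAu) \in J_{2n}\mathscr{U}$. In particular $A u_i = J_{2n} M c$ for some $c \in \mathbb{R}^{2k}$, and applying $M^T$ while using $M^T J_{2n} M = J_{2k}$ gives $M^T A u_i = J_{2k} c$. Comparing with $M^T A u_i = \gamma_i J_{2k} b_i$ and using invertibility of $J_{2k}$ forces $c = \gamma_i b_i$, so $A u_i = \gamma_i J_{2n} v_i$; the analogous argument for $v_i$ yields $A v_i = -\gamma_i J_{2n} u_i$. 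Hence each $(u_i, v_i)$ is a symplectic eigenvector pair of $A$ for the value $\gamma_i$. The main obstacle here is precisely the need for $JA$-invariance: without it, the residual of $Au_i$ in the symplectic complement of $\mathscr{U}$ would not vanish and the pair identity would fail.

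It remains to verify that $\gamma_1, \ldots, \gamma_k$ are symplectic eigenvalues of the original matrix $A$ and not merely of $A'$. Applying the equivalent Williamson form to $A$ itself, a direct computation shows that the operator $J_{2n} A$ on $\mathbb{R}^{2n}$ has spectrum $\{\pm i\, d_1(A), \ldots, \pm i\, d_n(A)\}$ with matching multiplicities. The relations $J_{2n} A u_i = -\gamma_i v_i$ and $J_{2n} A v_i = \gamma_i u_i$ derived above show that the spectrum of the restriction of $J_{2n} A$ to $\mathscr{U}$ is $\{\pm i \gamma_1, \ldots, \pm i \gamma_k\}$, and by the $JA$-invariance of $\mathscr{U}$ this spectrum embeds into the spectrum of $J_{2n} A$ with multiplicities; consequently each $\gamma_j$ equals some $d_{i_j}(A)$, completing the proof.
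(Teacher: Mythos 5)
Your proof is correct and takes a genuinely different route from the paper's. The paper complexifies: it observes that $A^{1/2}(\mathscr{U}+\iota\mathscr{U})$ is an invariant subspace of the Hermitian matrix $\iota A^{1/2}JA^{1/2}$, takes an orthonormal eigenbasis of that subspace corresponding to its positive eigenvalues, and converts those complex eigenvectors back into symplectic eigenvector pairs of $A$ via Lemma~2.2 and Proposition~2.3 of \cite{jm}, followed by a rescaling $(u_i,v_i)=\sqrt{2\gamma_i}\,(x_i,y_i)$ to achieve symplectic normalization. You instead stay entirely in the real setting: you compress $A$ to $A'=M^T A M\in\mathbb{P}_{2k}(\mathbb{R})$ via a symplectic frame $M\in\operatorname{Sp}(2n,2k)$ for $\mathscr{U}$, apply Williamson's theorem directly to $A'$, and use the $JA$-invariance of $\mathscr{U}$ (which gives $Au\in J_{2n}\mathscr{U}$ for every $u\in\mathscr{U}$) to upgrade the compressed relations $A'a_i=\gamma_i J_{2k}b_i$ to the full relations $Au_i=\gamma_i J_{2n}v_i$ in $\mathbb{R}^{2n}$. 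A pleasant byproduct of your route is that normalization is automatic, since $M^T J_{2n}M=J_{2k}$ carries a symplectic basis of $\mathbb{R}^{2k}$ to one of $\mathscr{U}$ with no rescaling step; the paper's route, in turn, keeps the argument aligned with the $\iota A^{1/2}JA^{1/2}$ machinery that is reused later in Proposition~\ref{prop:piecewise-analyticity-sym-subspace}. Your concluding step --- identifying each $\gamma_j$ as a genuine symplectic eigenvalue of $A$ by comparing the spectrum of $J_{2n}A$ restricted to the invariant subspace $\mathscr{U}$ with its full spectrum $\{\pm\iota d_1(A),\ldots,\pm\iota d_n(A)\}$ --- is correct and closes the gap between ``symplectic eigenvalue of $A'$'' and ``symplectic eigenvalue of $A$.''
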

\begin{proof}
    Let $A \in \mathbb{P}_{2n}(\mathbb{R})$ and $\mathscr{U}$ be a $2k$-dimensional symplectic subspace of $\mathbb{R}^{2n}$.
    Assume that $\mathscr{U}$ is an invariant subspace of $JA$. 
    This implies that
\begin{align}
	A^{1/2} \left(\mathscr{U}+\iota \mathscr{U} \right) \coloneqq \{A^{1/2}u + \iota A^{1/2} v: u,v \in \mathscr{U}\} 				\subset \mathbb{C}^{2n}
\end{align}
    is an invariant subspace of the Hermitian matrix $\iota A^{1/2}JA^{1/2}$.
    \sloppy	
    Therefore, $A^{1/2} \left(\mathscr{U}+\iota \mathscr{U} \right)$ has an orthonormal basis of eigenvectors of $\iota A^{1/2}JA^{1/2}$.  
    
    It is known that the eigenvalues of $\iota A^{1/2}JA^{1/2}$ are $\pm d_1(A),\ldots, \pm d_n(A)$. 
    Moreover, for $x,y \in \mathbb{R}^{2n}$, $x + \iota y$  is an eigenvector of $\iota A^{1/2}JA^{1/2}$ corresponding to its eigenvalue $\lambda$ if and only if $x - \iota y$  is an eigenvector of $\iota A^{1/2}JA^{1/2}$ corresponding to its eigenvalue $-\lambda$ \cite[Lemma~2.2]{jm}.
    So, the eigenvalues of $\iota A^{1/2}JA^{1/2}$ restricted to the subspace $A^{1/2} \left(\mathscr{U}+\iota \mathscr{U} \right)$ occur in negative-positive pairs.
    Also, the dimension of $A^{1/2} \left(\mathscr{U}+\iota \mathscr{U} \right)$ as a subspace of $\mathbb{C}^{2n}$ is $2k$.
    Therefore, there exist $x_i, y_i \in \mathscr{U}$ for $i=1,\ldots,k$ such that $A^{1/2}x_1-\iota A^{1/2} y_1, \ldots, A^{1/2} x_k-\iota A^{1/2} y_k$ are orthonormal eigenvectors of $\iota A^{1/2}JA^{1/2}$ corresponding to some positive eigenvalues $\gamma_1 \leq \cdots \leq \gamma_k$.
    Proposition~2.3 of \cite{jm} then implies that the set $\{x_1, \ldots, x_k, y_1,\ldots, y_k\} \subset \mathscr{U}$ is symplectically orthogonal, and $(x_i, y_i)$ is a symplectic eigenvector pair of $A$ corresponding to its symplectic eigenvalue $\gamma_i$ satisfying
    \begin{align}
        \langle x_i, J y_i\rangle = \dfrac{1}{2\gamma_i}, \qquad i=1,\ldots, k.
    \end{align}
    Thus, choosing for each $i \in \{1,\ldots, k\}$,
    \begin{align}
        u_i &=\sqrt{2\gamma_i}x_i, \\
        v_i &= \sqrt{2\gamma_i}y_i,
    \end{align}
    gives the desired symplectic basis $\{u_1,\ldots, u_k, v_1,\ldots, v_k\}$ of $\mathscr{U}$.
\end{proof}

\begin{definition}
    Let $A$, $\mathscr{U}$, and $\gamma_1,\ldots, \gamma_k$ be as in Proposition~\ref{prop:inv_sym_subspace_property}.
    We call $\gamma_1,\ldots, \gamma_k$ the symplectic eigenvalues of $A$ associated with $\mathscr{U}$.
\end{definition}
We know that for given $A, B \in \mathbb{P}_{2n}(\mathbb{R})$, the symplectic eigenvalue maps $[0,1] \ni t \mapsto d_i(A+tB)$ for $i=1,\ldots, n$ are picewise analytic \cite[Theorem~4.7]{jm}.
More generally, the symplectic eigenvalues of $A+tB$ associated with a fixed symplectic subspace are also piecewise analytic functions of $t$ in $[0,1]$, as stated in the following proposition.
\begin{proposition}\label{prop:piecewise-analyticity-sym-subspace}
    Let $A, B \in \mathbb{P}_{2n}(\mathbb{R})$ and $\mathscr{U}$ be a $2k$-dimensional symplectic subspace of $\mathbb{R}^{2n}$ that is invariant under $A+tB$ for all $t \in [0,1]$. 
    Let $\gamma_{1}(t)\leq \cdots \leq \gamma_{k}(t)$ be the symplectic eigenvalues of $A+tB$ associated with $\mathscr{U}$ for $t \in  [0,1]$. 
    Then the maps $\gamma_{i}:[0,1] \to \mathbb{R}$ for $i=1,\ldots,k$ are piecewise analytic.
\end{proposition}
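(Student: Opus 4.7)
The plan is to reduce the statement to Theorem~4.7 of \cite{jm} by pulling back the linear pencil $A+tB$ to $\mathbb{R}^{2k}$ through a fixed symplectic identification of $\mathscr{U}$. Concretely, pick once and for all a symplectic basis $\{e_1,\ldots,e_k,f_1,\ldots,f_k\}$ of $\mathscr{U}$ (which exists since $\mathscr{U}$ is symplectic) and let $S \coloneqq [e_1 \mid \cdots \mid e_k \mid f_1 \mid \cdots \mid f_k] \in \operatorname{Sp}(2n,2k)$, so that $S^{T}J_{2n}S = J_{2k}$. Define
\begin{align}
    C(t) \coloneqq S^{T}(A+tB)S = S^{T}AS + t\,S^{T}BS, \qquad t \in [0,1].
\end{align}
Since $S$ has full column rank and $A+tB \in \mathbb{P}_{2n}(\mathbb{R})$, one has $C(t) \in \mathbb{P}_{2k}(\mathbb{R})$, and the path $t \mapsto C(t)$ is affine. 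I would then aim to prove the identity
\begin{align}\label{eq:target-id-sym-subspace}
    \gamma_i(t) = d_i(C(t)), \qquad i=1,\ldots,k,\ t \in [0,1],
\end{align}
after which Theorem~4.7 of \cite{jm} applied to the affine path $C(t) \in \mathbb{P}_{2k}(\mathbb{R})$ yields piecewise analyticity of each $d_i(C(t))$, and hence of $\gamma_i$.

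To verify \eqref{eq:target-id-sym-subspace}, I would set up a bijection between normalized symplectic eigenpairs of $C(t)$ in $\mathbb{R}^{2k}$ and those of $A+tB$ lying inside $\mathscr{U}$. For the compression direction, if $(u,v) \subset \mathscr{U}$ is a normalized symplectic eigenpair of $A+tB$ with eigenvalue $\gamma$, writing $u = S\tilde u$, $v = S\tilde v$ and left-multiplying the Williamson relations \eqref{williamson_alternate_form} by $S^{T}$ immediately gives a normalized symplectic eigenpair $(\tilde u,\tilde v)$ of $C(t)$ with the same eigenvalue. For the lifting direction, start from a normalized symplectic eigenpair $(\tilde u,\tilde v)$ of $C(t)$ with eigenvalue $d$ and set $u \coloneqq S\tilde u$, $v \coloneqq S\tilde v \in \mathscr{U}$; expanding $C(t)\tilde u = d\, J_{2k}\tilde v$ through $S^{T}J_{2n}S = J_{2k}$ gives
\begin{align}
    S^{T}\bigl[(A+tB)u - d\, J_{2n}v\bigr] = 0.
\end{align}
By the invariance hypothesis (read as $\mathscr{U}$ being invariant under $J(A+tB)$, which is the natural setting compatible with Proposition~\ref{prop:inv_sym_subspace_property}), $(A+tB)u$ lies in $J_{2n}\mathscr{U}$, and so does $J_{2n}v$; hence the bracketed vector lies simultaneously in $J_{2n}\mathscr{U}$ and in $\ker(S^{T}) = \mathscr{U}^{\perp}$. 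The symplectic character of $\mathscr{U}$ then promotes this to the direct sum decomposition $\mathbb{R}^{2n} = J_{2n}\mathscr{U} \oplus \mathscr{U}^{\perp}$ (since $\mathscr{U}^{\perp}$ is precisely the symplectic complement of $J_{2n}\mathscr{U}$), forcing the bracketed vector to vanish. The companion Williamson relation follows identically, so $(u,v)$ is a normalized symplectic eigenpair of $A+tB$ in $\mathscr{U}$.

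The main obstacle is precisely this cancellation step in the lifting direction, which relies on the decomposition $\mathbb{R}^{2n} = J_{2n}\mathscr{U} \oplus \mathscr{U}^{\perp}$ and hence on the symplectic (not merely even-dimensional) nature of $\mathscr{U}$. Once \eqref{eq:target-id-sym-subspace} is in hand, the conclusion is immediate from Theorem~4.7 of \cite{jm} applied to the affine path $C(t) \in \mathbb{P}_{2k}(\mathbb{R})$.
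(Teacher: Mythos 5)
Your proposal is correct and takes a genuinely different route from the paper's proof. The paper proceeds by first invoking Kato's analytic perturbation theory for the eigenvalues $\mu_1(t),\ldots,\mu_{2k}(t)$ of $\iota J(A+tB)$ restricted to $\mathscr{U}+\iota\mathscr{U}$, then invoking piecewise analyticity of the global symplectic eigenvalue maps $d_i(A+tB)$ from Theorem~4.7 of \cite{jm}, and finally using the identity theorem for real analytic functions to match each $\mu_{\rho_i}$ with a global $d_{\sigma_i}$ on each analyticity piece, thereby transferring piecewise analyticity to the ordered positive eigenvalues $\gamma_i$. You instead fix a symplectic basis matrix $S\in\operatorname{Sp}(2n,2k)$ for $\mathscr{U}$ and pass to the compressed affine pencil $C(t)=S^TAS+tS^TBS\in\mathbb{P}_{2k}(\mathbb{R})$; the invariance hypothesis (correctly read as invariance under $J(A+tB)$) together with the direct sum $\mathbb{R}^{2n}=J\mathscr{U}\oplus\mathscr{U}^{\perp}$ gives the identity $\gamma_i(t)=d_i\bigl(C(t)\bigr)$, after which a single application of Theorem~4.7 of \cite{jm} to the $2k\times 2k$ pencil finishes. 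The key mechanism behind your lifting step can equivalently be phrased as the similarity $S J_{2k}C(t)\tilde w = J_{2n}(A+tB)S\tilde w$ on $\mathbb{C}^{2k}$, valid because $SJ_{2k}S^T$ agrees with $J_{2n}$ on $J\mathscr{U}$; this shows $\iota J_{2k}C(t)$ is similar to the restriction of $\iota J_{2n}(A+tB)$ to $\mathscr{U}+\iota\mathscr{U}$, which pins down \eqref{eq:target-id-sym-subspace} with multiplicities cleanly. Your route avoids Kato's perturbation theory and the analytic-continuation matching step entirely, trading them for the one-time compression identity; the paper's route avoids introducing the compressed pencil but must then reconcile two a priori unrelated families of piecewise analytic functions. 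Both are correct; yours is somewhat more self-contained.

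Two small points worth making explicit in a writeup: the positive definiteness of $C(t)$ follows from $S$ having full column rank (so $S^T(A+tB)S\succ 0$), which is what licenses the appeal to Theorem~4.7 of \cite{jm} on $\mathbb{P}_{2k}(\mathbb{R})$; and the direct sum $\mathbb{R}^{2n}=J\mathscr{U}\oplus\mathscr{U}^{\perp}$ is exactly the statement that $J\mathscr{U}$ is a symplectic subspace (its symplectic complement is $\mathscr{U}^{\perp}$, and the two meet trivially), which in turn is equivalent to $\mathscr{U}$ being symplectic since $\langle Ju, J(Jv)\rangle = \langle u, Jv\rangle$.
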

\begin{proof}
    The eigenvalues of $\iota J(A+tB)$ restricted to $\mathscr{U}+\iota \mathscr{U}$ can be arranged such that they are piecewise analytic functions of $t$. 
    That is, there exist piecewise analytic functions $\mu_1,\ldots, \mu_{2k}$ on $[0,1]$ such that $\mu_1(t),\ldots, \mu_{2k}(t)$ are the eigenvalues of $\iota J(A+tB)$ restricted to $\mathscr{U}+\iota \mathscr{U}$ for all $t \in [0,1]$.
    See \cite[Chapter~2, Section~1]{kato2013perturbation}.
    We also know from Theorem~4.7 of \cite{jm} that the symplectic eigenvalue maps, defined by $d_i(t)\coloneqq d_i(A+tB)$ for $i=1,\ldots, n$ and $t \in [0,1]$ are piecewise analytic on $[0,1]$.
    So, there exist positive numbers $0=a_0 < a_1 < \cdots < a_r=1$ such that $\mu_1,\ldots, \mu_{2k}$ and $d_1,\ldots, d_n$ are analytic on each $(a_{\ell-1}, a_\ell)$ for $\ell=1,\ldots, r$.
    
    Let us fix an arbitrary $\ell \in \{1,\ldots, r\}$.
    Since the eigenvalues of $\iota J(A+tB)$ occur in negative-positive pairs and $\{\mu_1(t),\ldots, \mu_{2k}(t)\} \subset \{d_1(A+tB),\ldots, d_n(A+tB),-d_1(A+tB),\ldots, -d_n(A+tB)\}$ for all $t \in (a_{\ell-1}, a_\ell)$ \cite[Lemma~2.2]{jm}, it follows from the elementary theory of analytic functions \cite[Chapter~1]{krantz2002primer} that there exist distinct indices $\rho_1 \ldots, \rho_k \in \{1,\ldots, 2k\}$ and $1\leq \sigma_1 < \cdots < \sigma_k \leq n$ such that $\mu_{\rho_i}(t)=d_{\sigma_i}(A+tB)$ for all $t \in (a_{\ell-1}, a_\ell)$.
    Since the positive eigenvalues of $\iota J(A+tB)$ restricted to $\mathscr{U}+\iota \mathscr{U}$ are $ \gamma_{1}(t), \ldots, \gamma_{k}(t)$, we get $\gamma_i(t)=\mu_{\rho_i}(t)=d_{\sigma_i}(A+tB)$ for all $i=1,\ldots, k$ and $t \in (a_{\ell-1}, a_\ell)$. 
    Therefore, $\gamma_1,\ldots, \gamma_k$ are piecewise analytic on $[0,1]$.
\end{proof}
\begin{remark}
    It is not clear if by assuming the given hypotheses of Proposition~\ref{prop:piecewise-analyticity-sym-subspace}, the symplectic eigenvalues of $A+tB$ associated with $\mathscr{U}$ can be arranged analytically as functions of $t \in [0,1]$. 
    This is because the matrix $\iota J(A+tB)$ is not normal, and the theory of analytic perturbation of non-normal matrices only gives piecewise analyticity of eigenvalues. 
    See Chapter~2 of \cite{kato2013perturbation} for a comprehensive study on the matter.
\end{remark}

    We close this section by providing an interesting necessary condition for the equality to hold in \eqref{eq:sym-lidskii-inequality}.
\begin{theorem}\label{thm:necessary-sym-lidskii}
    Let $A, B \in \mathbb{P}_{2n}(\mathbb{R})$ and $1 \leq i_1 < \cdots < i_k \leq n$ be any distinct indices such that equality holds in the symplectic Lidskii's inequality \eqref{eq:sym-lidskii-inequality}.
    Then there exist $2k$-dimensional symplectic subspaces $\mathscr{U}_1,\ldots, \mathscr{U}_r \subset \mathbb{R}^{2n}$ and positive numbers $0=a_0<a_1<\cdots < a_{r}=1$ satisfying the following conditions:
    \begin{itemize}
        \item [(i)] The subspaces $\mathscr{U}_1,\ldots, \mathscr{U}_r$ are invariant under both $JA$ and $JB$.
        \item [(ii)] Each subspace $\mathscr{U}_\ell$ for $\ell = 1,\ldots, r$ has a symplectic basis consisting of symplectic eigenvector pairs of $B$ corresponding to the symplectic eigenvalues $d_1(B),\ldots, d_k(B)$. 
        \item [(iii)] \sloppy For each $\ell \in \{1,\ldots, r\}$, there exists a non-trivial subinterval $[b_{\ell}, c_{\ell}] \subset [a_{\ell-1}, a_\ell]$ such that for all $t \in [b_{\ell}, c_{\ell}]$, the symplectic eigenvalues of $A+tB$ associated with $\mathscr{U}_\ell$ are $d_{i_1}(A+tB), \ldots, d_{i_k}(A+tB)$.
    \end{itemize}
\end{theorem}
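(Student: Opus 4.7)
The plan is to leverage characterization (iii) of Theorem~\ref{thm:equality-sym-lidskii-inequalities}. Assuming the equality \eqref{eq:sym-lidskii-equality}, I obtain a partition $0=a_0<a_1<\cdots<a_r=1$ and, on each $(a_{\ell-1},a_\ell)$, a piecewise analytic family $t\mapsto M(t)\in\operatorname{Sp}(2n,2k)$ whose columns form symplectic eigenvector pairs of $A+tB$ for $d_{i_1}(A+tB),\ldots,d_{i_k}(A+tB)$ and which satisfies the trace identities \eqref{eq1:conditions-equality-sym-lidskii}--\eqref{eq2:conditions-equality-sym-lidskii}. I refine the partition so that, on each subinterval, any two analytic branches $t\mapsto d_i(A+tB)$ either coincide identically or never coincide; this is possible because analytic functions on an open interval have isolated zeros unless identically zero. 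For each $\ell$, I fix an interior point $t^{*}\in(a_{\ell-1},a_\ell)$ and define $\mathscr{U}_\ell:=\operatorname{Ran}(M(t^{*}))$, a $2k$-dimensional symplectic subspace of $\mathbb{R}^{2n}$.

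The core step is to show that $\mathscr{U}_\ell$ is $JB$-invariant with associated symplectic eigenvalues exactly $d_1(B),\ldots,d_k(B)$, establishing (ii). By \eqref{eq2:conditions-equality-sym-lidskii}, $M(t^{*})$ achieves the minimum in the extremal characterization $\tfrac12\operatorname{Tr}[M^{T}BM]\geq\sum_{j=1}^{k} d_j(B)$ of \cite[Theorem~5]{bhatia2015symplectic}. A first-order optimality analysis on the manifold $\operatorname{Sp}(2n,2k)$---whose tangent space at $M$ consists of the matrices $N$ with $M^{T}JN$ symmetric---yields a Lagrange relation $B M(t^{*})=JM(t^{*})\Lambda$ for some $\Lambda\in\mathbb{M}_{2k}(\mathbb{R})$, whence $JB\cdot\mathscr{U}_\ell\subset\mathscr{U}_\ell$. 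Proposition~\ref{prop:inv_sym_subspace_property} then furnishes a symplectic basis of $\mathscr{U}_\ell$ consisting of symplectic eigenvector pairs of $B$ for some eigenvalues $\gamma_1\leq\cdots\leq\gamma_k$. The interlacing of \cite[Theorem~9]{bhatia2015symplectic} gives $\gamma_j\geq d_j(B)$, while writing $M(t^{*})=NT$ with $N$ the basis matrix and $T\in\operatorname{Sp}(2k)$, a second application of the extremal characterization to $N^{T}BN=\Gamma\oplus\Gamma$ yields $\sum\gamma_j\leq\tfrac12\operatorname{Tr}[M(t^{*})^{T}BM(t^{*})]=\sum d_j(B)$. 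Together these force $\gamma_j=d_j(B)$ for every $j$. Property~(i) then follows immediately: $\mathscr{U}_\ell$ is $J(A+t^{*}B)$-invariant by construction, so $JA=J(A+t^{*}B)-t^{*}JB$ also preserves it.

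To obtain (iii), observe that $\mathscr{U}_\ell$ is $J(A+tB)$-invariant for every $t\in[0,1]$, so Proposition~\ref{prop:piecewise-analyticity-sym-subspace} provides piecewise analytic associated eigenvalue maps $\gamma_j(t)$, and the proof of that proposition supplies fixed indices $\sigma_1<\cdots<\sigma_k$ such that $\gamma_j(t)=d_{\sigma_j}(A+tB)$ on a small open interval around $t^{*}$. Since $\gamma_j(t^{*})=d_{i_j}(A+t^{*}B)$ by construction, the constant-equality-pattern property of the refined partition propagates $d_{\sigma_j}(A+tB)=d_{i_j}(A+tB)$ to a non-trivial subinterval $[b_\ell,c_\ell]\subset[a_{\ell-1},a_\ell]$ containing $t^{*}$, as required.

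The main obstacle is extracting the $JB$-invariance of $\mathscr{U}_\ell$ from the scalar identity~\eqref{eq2:conditions-equality-sym-lidskii}---effectively the equality case of the extremal characterization of \cite[Theorem~5]{bhatia2015symplectic}, which does not appear explicitly in the cited literature. The Lagrange-multiplier computation on $\operatorname{Sp}(2n,2k)$ sketched above provides a clean route; once it is in place, the remainder of the proof rests entirely on the already-established Propositions~\ref{prop:inv_sym_subspace_property} and \ref{prop:piecewise-analyticity-sym-subspace}.
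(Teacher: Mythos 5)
Your proposal is correct and reaches the same endpoint, but the pivotal step --- extracting the structure of $\mathscr{U}_\ell$ from the trace identity $\tfrac12\operatorname{Tr}[M(t^{*})^{T}BM(t^{*})]=\sum_{j=1}^{k}d_j(B)$ --- is handled by a genuinely different argument. The paper invokes Theorem~4.6(ii) of Son, Absil, Gao, and Stykel \cite{son2021symplectic}, which directly provides an orthosymplectic $U\in\operatorname{OrSp}(2k)$ such that the columns of $M(t_\ell)U$ are symplectic eigenvector pairs of $B$ for $d_1(B),\ldots,d_k(B)$; properties (i) and (ii) then fall out immediately because $\operatorname{Ran}(M(t_\ell)U)=\mathscr{U}_\ell$. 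You instead reconstruct this equality-case result from scratch: a first-order condition on the constraint manifold $\operatorname{Sp}(2n,2k)$ gives $BM(t^{*})\in\operatorname{Ran}(JM(t^{*}))$, hence $JB$-invariance of $\mathscr{U}_\ell$; Proposition~\ref{prop:inv_sym_subspace_property} then yields symplectic eigenvalues $\gamma_1\leq\cdots\leq\gamma_k$ of $B$ associated with $\mathscr{U}_\ell$, and the two-sided estimate $\gamma_j\geq d_j(B)$ (a pigeonhole fact about any $k$-element subfamily of the spectrum, not really interlacing) together with $\sum\gamma_j\leq\sum d_j(B)$ forces $\gamma_j=d_j(B)$. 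Your route is more self-contained, at the cost of a manifold-calculus lemma the paper does not need; the paper's is shorter because the equality case of the variational characterization is already in the literature (so your concern that it ``does not appear explicitly in the cited literature'' is not quite right --- it is precisely what \cite[Theorem~4.6(ii)]{son2021symplectic} asserts). Your handling of (iii) via Proposition~\ref{prop:piecewise-analyticity-sym-subspace} and analytic continuation of the index pattern matches the paper's argument essentially verbatim.

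One small bookkeeping point worth tightening if you write this up: the Lagrange relation as stated gives $BM(t^{*})=JM(t^{*})\Lambda$, and since $J^{2}=-I$ this yields $JBM(t^{*})=-M(t^{*})\Lambda\in\operatorname{Ran}(M(t^{*}))$, which is the invariance you need; it is worth recording that the multiplier $\Lambda$ coming from the adjoint of the constraint map $M\mapsto M^{T}JM-J_{2k}$ is in fact skew-symmetric, though nothing in your argument depends on this.
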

\begin{proof}
    By Theorem~4.6 of \cite{jm}, there exist analytic maps $\tilde{d}_1,\ldots, \tilde{d}_n:[0,1]\to \mathbb{R}$ such that $\tilde{d}_1(t),\ldots, \tilde{d}_n(t)$ are the $n$ symplectic eigenvalues (not necessarily in increasing order) of $A+tB$ for all $t \in [0,1]$. 
    It follows from the elementary theory of analytic functions \cite[Chapter~1]{krantz2002primer} that there exist finitely many numbers $0=a_0 <a_1<\cdots < a_r=1$ and $K \in \{1,\ldots, n\}$ such that for all $t \in [0,1]\backslash \{a_0,a_1,\ldots, a_r\}$, $A+tB$ has exactly $K$ distinct symplectic eigenvalues; these $K$ distinct symplectic eigenvalues can be assumed to be $\tilde{d}_1(t),\ldots, \tilde{d}_K(t)$ after some reordering, and each $\tilde{d}_i(t)$ repeats a fixed number of times $m_i$ for $i=1,\ldots, K$ so that $m_1+\cdots + m_K=n$. 
    We can thus arrange the symplectic eigenvalues of $A+tB$ for $t \in [0,1]\backslash \{a_0,a_1,\ldots, a_r\}$ as
    \begin{multline}
     d_{1}(A+tB) =\cdots = d_{m_1}(A+tB)< d_{m_1+1}(A+tB)=\cdots = d_{m_1+m_2}(A+tB)\\
        < \cdots < 
            d_{m_1+\cdots +m_{K-1}+1}(A+tB) = \cdots = d_{m_1+\cdots +m_{K}}(A+tB) \label{eq:analytic-ordered-sym-eigenvalues}
    \end{multline} 
    so that, given any $\ell \in \{1,\ldots,r\}$ and $i \in \{1,\ldots,n\}$, the restriction of the symplectic eigenvalue map
    $[0,1] \ni t \mapsto d_i(t) \coloneqq d_i(A+tB)$ to $(a_{\ell-1}, a_\ell)$ is equal to one of the analytical maps $\tilde{d}_1,\ldots, \tilde{d}_K$. 
    Also, without loss of generality, it follows from Theorem~4.7 of \cite{jm} that there exist maps $u_1,\ldots, u_n, v_1,\ldots, v_n : [0,1] \to \mathbb{R}^{2n}$ that are analytic on each open interval $(a_{\ell-1}, a_{\ell})$ for $\ell=1,\ldots, r$ such that $\{u_1(t),\ldots, u_n(t), v_1(t),\ldots, v_n(t)\}$ is a symplectic eigenbasis of $A+tB$ for all $t \in [0,1]$. 
    \sloppy 
    Set $M(t) \coloneqq \left[ u_{i_1}(t), \ldots, u_{i_k}(t), v_{i_1}(t),\ldots,v_{i_k}(t) \right] \in \operatorname{Sp}(2n, 2k)$ for all $t \in [0,1]$.

    Let us assume that the equality holds in \eqref{eq:sym-lidskii-inequality}.
    For every $\ell \in \{1,\ldots, r\}$, let us fix $t_\ell \in (a_{\ell-1}, a_{\ell})$. 
    Define $\mathscr{U}_\ell \coloneqq \operatorname{Ran}(M(t_\ell))$ for $\ell=1,\ldots,r$. 
    Observe that each $\mathscr{U}_\ell$ is a symplectic subspace of $\mathbb{R}^{2n}$ and is invariant under $J(A+t_\ell B)$. 
    This follows simply from \cite[Lemma~2.2]{jm} and the fact that the columns of $M(t_\ell)$ form symplectic eigenvector pairs of $A+t_\ell B$ corresponding to the symplectic eigenvalues $d_{i_1}(A+t_\ell B),\ldots, d_{i_k}(A+t_\ell B)$.
    The following arguments show that $\mathscr{U}_\ell$ is also an invariant subspace of $JB$.
    Recall from \eqref{eq_v1:first-equality-condition-5} that
        \begin{align}
            \dfrac{1}{2}\operatorname{Tr}\!\left[M(t_\ell)^T B M(t_\ell) \right] 
        		&= \sum_{j=1}^k d_j(B) \label{eq:first-equality-condition-5}.
        \end{align}
    By Theorem~4.6~(ii) of \cite{son2021symplectic}, there exists $U \in \operatorname{OrSp}(2k)$ such that the columns of $M(t_\ell) U$ consist of symplectic eigenvector pairs of $B$ corresponding to the symplectic eigenvalues $d_1(B),\ldots,d_k(B)$.
    Also, we have $\operatorname{Ran}(M(t_\ell) U)=\operatorname{Ran}(M(t_\ell))=\mathscr{U}_\ell$. 
    This means that the columns of $M(t_\ell) U$ form a desired symplectic basis of  $\mathscr{U}_\ell$ consisting of symplectic eigenvector pairs of $B$ corresponding to the symplectic eigenvalues $d_1(B),\ldots,d_k(B)$. 
    Consequently, $\mathscr{U}_\ell$ is also an invariant subspace of $JB$ \cite[Lemma~2.2]{jm}. 
    The fact that $\mathscr{U}_\ell$ is an invariant subspace of both $J(A+t_\ell B)$ and $JB$ implies that $\mathscr{U}_\ell$ is also invariant under $JA$.
    We have thus shown that the symplectic subspaces $\mathscr{U}_1,\ldots, \mathscr{U}_r$ satisfy the statements $(i)$ and $(ii)$ of the proposition. In the remaining part of the proof, we show that the statement $(iii)$ also holds.
    
    Since each $\mathscr{U}_\ell$ is invariant under both $JA$ and $JB$, it is also invariant under $J(A+tB)$ for all $t \in  [a_{\ell-1}, a_\ell]$. 
    Let $\gamma_{1,\ell}(t)\leq \cdots \leq \gamma_{k,\ell}(t)$ be the symplectic eigenvalues of $A+tB$ associated with $\mathscr{U}_\ell$ for $t \in  [a_{\ell-1}, a_\ell]$. 
    By Proposition~\ref{prop:piecewise-analyticity-sym-subspace}, we know that the maps $\gamma_{1,\ell},\ldots, \gamma_{k,\ell}$ are piecewise analytic on $[a_{\ell-1}, a_\ell]$.
    Thus, there exists a non-trivial subinterval $[b_\ell, c_\ell] \subset [a_{\ell-1}, a_\ell]$ containing $t_\ell$ such that $\gamma_{1,\ell},\ldots, \gamma_{k,\ell}$ are analytic on $(b_\ell, c_\ell)$.
    Also, recall that the symplectic eigenvalue maps $d_1,\ldots, d_n$ are analytic on $(b_\ell, c_\ell)$.
    By following similar arguments as given in the second paragraph of the proof of Proposition~\ref{prop:piecewise-analyticity-sym-subspace}, we get that for all $\ell \in \{1,\ldots,r\}$, there exist $k$-distinct indices $1\leq \sigma_1 < \cdots < \sigma_k \leq n$ such that $\gamma_{i,\ell}(t)=d_{\sigma_i}(A+tB)$ for all $t \in (b_{\ell}, c_\ell)$.
    We know that  $\gamma_{j,\ell}(t_\ell)=d_{i_j}(A+t_\ell B)$, because the symplectic eigenvalues of $A+t_\ell B$ associated with $\mathscr{U}_\ell$ are $d_{i_1}(A+t_\ell B), \ldots, d_{i_k}(A+t_\ell B)$. 
    In view of \eqref{eq:analytic-ordered-sym-eigenvalues} and continuity of symplectic eigenvalues \cite{bhatia2015symplectic}, we thus have $\gamma_{j,\ell}(t)=d_{i_j}(A+tB)$ for $t \in [b_{\ell}, c_\ell]$ and $j=1,\ldots, k$.  
    This proves statement $(iii)$ of the proposition.
\end{proof}


\section{Majorization in symplectic Schur--Horn weak supermajorization}\label{sec:schur--horn}
    \sloppy Given any vector $x$ in $\mathbb{R}^n$, denote the entries of $x$ in the ascending order by $x^{\uparrow}_1 \leq  \cdots \leq x^{\uparrow}_n$. 
    Let $x,y$ be two vectors in $\mathbb{R}^n$. We say that $x$ is \textit{ weakly supermajorized} by $y$, written as $x \prec^w y$, if 
    \begin{align}\label{eq:supmajorization}
    \sum_{i=1}^k x^{\uparrow}_i \geq \sum_{i=1}^k y^{\uparrow}_i, \quad \text{for } k=1,\ldots, n.
    \end{align}
    In addition, if the equality in \eqref{eq:supmajorization} holds for $k=n$,  then $x$ is said to be \textit{ majorized} by $y$ and is written as $x \prec y$. 
    See \cite{marshall1979inequalities} for a comprehensive theory of majorization.
    An $n \times n$ real matrix $E$ is said to be a doubly stochastic matrix if its $(i,j)$th entries $E_{ij}$ are non-negative for $i,j=1,\ldots, n$ such that
    \begin{align}
        \sum_{j=1}^n E_{ij}=1, \quad \text{for } i=1, \ldots, n, \\
        \sum_{i=1}^n E_{ij}=1, \quad \text{for } j=1,\ldots, n.
    \end{align}
    An $n \times n$ real matrix $F$ is said to be a doubly superstochastic matrix 
    if there exists a doubly stochastic matrix $E$ such that $E_{ij} \leq F_{ij}$ for all $i,j=1,\ldots, n$.
    We recall the following known fundamental result in the theory of majorization that will be useful in the proof of the main result of the section. See \cite[Theorem~1.3]{ando1989}.
\begin{lemma}\label{lemma:doubly-stochastic}
    Given $x, y \in \mathbb{R}^n$, we have $x \prec y$ if and only if there exists an $n \times n$ doubly stochastic matrix $E$ such that $x=Ey$.
\end{lemma}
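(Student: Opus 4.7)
The plan is to establish the two implications separately, using standard tools from majorization theory.

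For the ``if'' direction, suppose $x = Ey$ with $E$ doubly stochastic. The equality at $k=n$ follows immediately from the column sums of $E$ being one: $\sum_i x_i = \sum_{i,j} E_{ij} y_j = \sum_j y_j$. For the partial sum inequality at a general $k$, I would invoke the variational characterization $\sum_{i=1}^k x_i^\uparrow = \min_{|S|=k} \sum_{i \in S} x_i$, pick a minimizing $k$-subset $S$, and swap the summation order to write $\sum_{i=1}^k x_i^\uparrow = \sum_j c_j y_j$, where $c_j := \sum_{i \in S} E_{ij}$ satisfies $c_j \in [0,1]$ and $\sum_j c_j = k$ (the latter from the row sums of $E$). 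A one-line rearrangement then shows the minimum of $\sum_j c_j y_j$ over such weight vectors is precisely $\sum_{j=1}^k y_j^\uparrow$, giving the required inequality.

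For the ``only if'' direction, I would proceed constructively by induction on the number $N(x,y)$ of indices where the sorted vectors $x^\uparrow$ and $y^\uparrow$ disagree. If $N(x,y)=0$, take $E$ to be the permutation matrix realizing $x$ from $y$. Otherwise, the majorization inequalities together with $\sum_i x_i = \sum_i y_i$ force the existence of indices $p < q$ with $y_p^\uparrow < x_p^\uparrow$ and $x_q^\uparrow < y_q^\uparrow$: at the first disagreement index $p$ the partial sum of $x^\uparrow$ strictly exceeds that of $y^\uparrow$, forcing $x_p^\uparrow > y_p^\uparrow$, and symmetrically at the last disagreement index $q$ one finds $x_q^\uparrow < y_q^\uparrow$. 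I would then apply a T-transform $T_\theta$ acting as the identity off coordinates $p, q$ and as $(1-\theta) I + \theta P_{pq}$ on them, choosing $\theta$ maximally so that either $(T_\theta y^\uparrow)_p = x_p^\uparrow$ or $(T_\theta y^\uparrow)_q = x_q^\uparrow$. The resulting $y' := T_\theta y^\uparrow$ still satisfies $x \prec y'$ and has strictly fewer disagreements with $x^\uparrow$, so induction yields $x = E' y'$ for a doubly stochastic $E'$, and $E := E' T_\theta$ is the desired matrix (a product of doubly stochastic matrices is doubly stochastic).

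The main obstacle I anticipate is verifying that the maximally chosen T-transform simultaneously preserves the majorization relation $x \prec y'$ and strictly decreases the disagreement count. Preservation of majorization follows from the general principle that doubly stochastic maps can only make a vector ``more majorized,'' which in this special case can be checked by direct comparison of partial sums of $y^\uparrow$ and $y'$: the T-transform only redistributes mass between positions $p$ and $q$, and the resulting partial sums either match those of $y^\uparrow$ or become strictly larger. The decrease in disagreement count is where the maximal choice of $\theta$ is used; one must also verify that after re-sorting $y'$ the newly matched entry ends up in the correct position, which follows from the monotonicity $y_p^\uparrow \leq (T_\theta y^\uparrow)_p \leq (T_\theta y^\uparrow)_q \leq y_q^\uparrow$ valid for $\theta \in [0, 1/2]$.
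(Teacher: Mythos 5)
The paper does not actually prove this lemma; it is stated with a citation to Ando's survey (Theorem~1.3 of \cite{ando1989}) and no proof is given. So there is no in-paper argument to compare against, and your attempt is a self-contained reconstruction of the classical Hardy--Littlewood--P\'olya theorem. Your ``if'' direction is correct: the minimum of $\sum_j c_j y_j$ over $c\in[0,1]^n$ with $\sum_j c_j=k$ is attained at a $0/1$ vertex with exactly $k$ ones, hence equals $\sum_{j=1}^k y_j^\uparrow$, and the accounting with $c_j=\sum_{i\in S}E_{ij}$ is carried out correctly.

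The ``only if'' direction has a genuine gap in the inductive step. You take $p$ to be the first disagreement index and $q$ the last, apply a maximal T-transform on coordinates $p,q$, and then re-sort. With this choice of $p$ and $q$, re-sorting can shift the newly matched value past intermediate entries, and $N(x,y)$ need not strictly decrease. Take $x^\uparrow=(4,\,4.5,\,4.5)$ and $y^\uparrow=(1,\,3,\,9)$; then $x\prec y$ and all three coordinates disagree, so $p=1$, $q=3$. The maximal parameter is
\begin{equation}
\theta=\min\!\left(\frac{4-1}{9-1},\ \frac{9-4.5}{9-1}\right)=\frac{3}{8},
\end{equation}
which yields $T_\theta y^\uparrow=(4,\,3,\,6)$, re-sorting to $(3,\,4,\,6)$. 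All three coordinates still disagree with $x^\uparrow$, so the disagreement count is unchanged and your induction does not advance. The monotonicity $y_p^\uparrow\le (T_\theta y^\uparrow)_p\le (T_\theta y^\uparrow)_q\le y_q^\uparrow$ that you cite is true but does not control where the untouched entries $y_r^\uparrow$ with $p<r<q$ fall relative to the new values, which is exactly what goes wrong in the example.

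The classical argument repairs this with a sharper choice of the two indices: pick $p<q$ so that $x_p^\uparrow>y_p^\uparrow$, $x_q^\uparrow<y_q^\uparrow$, \emph{and} $x_i^\uparrow=y_i^\uparrow$ for all $p<i<q$ (e.g., let $q$ be the smallest index at which $x_q^\uparrow<y_q^\uparrow$ and preceding which there is some $p'$ with $x_{p'}^\uparrow>y_{p'}^\uparrow$, and then take $p$ to be the largest such index below $q$). With no disagreements strictly between $p$ and $q$, one checks by hand that $T_\theta y^\uparrow$ is already sorted, that $x^\uparrow\prec T_\theta y^\uparrow$ follows from a direct partial-sum computation, and that the maximal $\theta$ creates at least one new agreement without destroying any, so $N$ strictly drops. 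With that correction your outline becomes the textbook proof; as written it does not close.
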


    The classic Schur--Horn theorem gives a  relationship between the diagonal elements and the eigenvalues of a Hermitian matrix. 
    Let $X$ be an $n \times n$ Hermitian matrix. 
    Consider two real $n$-vectors  $\Delta(X)$ and $\lambda(X)$ whose entries are given by the diagonal entries and the eigenvalues of $X$, respectively.
    Schur \cite{schur1923uber} showed that the majorization relation $\Delta(X) \prec \lambda(X)$ always holds.
    The converse of Schur's result, proved by Horn \cite{horn1954doubly}, is also true: if $x,y$ are two vectors in $\mathbb{R}^n$ such that $x \prec y$, then there exists an $n\times n$ real symmetric matrix $X$ such that $\Delta(X)=x$ and $\lambda(X)=y$. See Section~9.B of \cite{marshall1979inequalities}.

    A symplectic version of the Schur--Horn theorem \cite[Theorem~3]{bhatia2020schur} states the following. Let $A$ be any $2n \times 2n$ real positive definite matrix in the block form
    \begin{align}
    A = \begin{pmatrix}
        A_{11} & A_{12} \\
        A_{12}^T & A_{22}
    \end{pmatrix},
    \end{align}
    where each block has size $n \times n$. The following weak supermajorization holds 
    \begin{align}\label{eqn18}
        \Delta_c(A) \prec^w d_s(A),
    \end{align}
    where $\Delta_c(A)$ and $d_s(A)$ are vectors of size $n$ given by
    \begin{align}
    \Delta_c(A) &\coloneqq \dfrac{1}{2} \left[\Delta(A_{11})+\Delta(A_{22})\right],\\
    d_s(A) &\coloneqq (d_1(A),\ldots, d_n(A)).
    \end{align}
    Conversely, if $x,y \in \mathbb{R}^{n}$ are vectors with positive entries such that $x \prec^w y$ then there exists $A \in \mathbb{P}_{2n}(\mathbb{R})$ such that $\Delta_c(A)=x$ and $d_s(A)=y$. 
    
\begin{remark}\label{rem:proof_correction}
    There is a typo in the proof of the aforementioned symplectic Schur--Horn theorem (Theorem~3) given in \cite{bhatia2020schur}. 
    The proof works when the diagonal entries of the $2n \times 2n$ matrix $M_{\alpha}$ are replaced with $\sqrt{\alpha_1}, \ldots, \sqrt{\alpha_n}, \sqrt{\alpha_1^{-1}}, \ldots, \sqrt{\alpha_n^{-1}}$, where $\alpha=({\alpha_1}, \ldots, {\alpha_n})$ is an $n$-vector with positive entries.
\end{remark}

\begin{remark}\label{remark}
        There are three other known symplectic analogs of the Schur--Horn theorem. These Schur--Horn type theorems are given by replacing $\Delta_c(A)$ in \eqref{eqn18} with the following vectors associated with the diagonal of $A$:
    \begin{align}
     \Delta_s(A) &\coloneqq \left[\Delta(A_{11}) \Delta(A_{22}) \right]^{1/2}, \\
     \Delta_w(A) &\coloneqq 2^{-1/2}\left[\Delta(A_{11})^2  + \Delta(A_{22})^2\right]^{1/2}, \\
    \Delta_h(A) &\coloneqq 2^{-1/2}\left[\Delta(A_{11})^2 + \Delta(A_{22})^2 + 2 \Delta(A_{12})^2 \right]^{1/2},
    \end{align}
    where the product, squares, and square roots of the vectors are taken entry-wise. See \cite{bhatia_jain_2021, huang2023}.
    Recently, necessary and sufficient conditions for the majorization to hold in these symplectic Schur--Horn theorems were established in \cite{huang_mishra}. 
\end{remark}

    In the following theorem, we provide exact description of positive definite matrices saturating the weak supermajorization in \eqref{eqn18} by majorization. 

\begin{theorem}\label{thm:sym_schur_horn}
    Let $A \in \mathbb{P}_{2n}(\mathbb{R})$ and $D$ be the $n \times n$ diagonal matrix with diagonal entries $d_1(A),\ldots, d_n(A)$. 
    We have  $\Delta_c(A) \prec d_s(A)$ if and only if $A=N (D \oplus D)N^T$ for some $2n \times 2n$ orthosymplectic matrix $N$.
\end{theorem}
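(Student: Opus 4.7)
The plan is to reduce the biconditional to a single scalar trace identity, and then to handle its equality case via a standard factorization of symplectic matrices. By the symplectic Schur--Horn theorem (Equation~\eqref{eqn18}), the weak supermajorization $\Delta_c(A) \prec^w d_s(A)$ holds for every $A \in \mathbb{P}_{2n}(\mathbb{R})$. Therefore, $\Delta_c(A) \prec d_s(A)$ is equivalent to this weak supermajorization together with equality of the total sums, which, in view of the definitions of $\Delta_c$ and $d_s$, is simply
\begin{align*}
\tfrac{1}{2}\operatorname{Tr}(A) = \sum_{i=1}^n d_i(A).
\end{align*}
So the theorem reduces to showing that this scalar identity holds if and only if $A$ admits an orthosymplectic Williamson decomposition.

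The ``if'' direction is immediate: if $A = N(D \oplus D) N^T$ with $N \in \operatorname{OrSp}(2n)$, in particular orthogonal, then $\operatorname{Tr}(A) = \operatorname{Tr}(D \oplus D) = 2\sum_{i=1}^n d_i(A)$. For the ``only if'' direction, I would invoke Williamson's theorem to write $A = M^{-T}(D \oplus D) M^{-1}$ with $M \in \operatorname{Sp}(2n)$, and then use the Euler (or Bloch--Messiah) decomposition to factor $M = O_1 \Lambda O_2$, where $O_1, O_2 \in \operatorname{OrSp}(2n)$ and $\Lambda = \operatorname{diag}(\alpha_1, \ldots, \alpha_n, \alpha_1^{-1}, \ldots, \alpha_n^{-1})$ for some positive $\alpha_1, \ldots, \alpha_n$. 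A direct computation then gives
\begin{align*}
\operatorname{Tr}(A) = \operatorname{Tr}\bigl((D \oplus D)\, M^{-1} M^{-T}\bigr) = \operatorname{Tr}(C\, \Lambda^{-2}),
\end{align*}
where $C \coloneqq O_2 (D \oplus D) O_2^T$. Because $O_2$ is orthosymplectic, it commutes with $J$, and since $D \oplus D$ commutes with $J$ as well, the matrix $C$ does so too; this forces $C_{ii} = C_{n+i,n+i}$ for every $i \in \{1,\ldots,n\}$. Expanding the trace then yields
\begin{align*}
\operatorname{Tr}(C\,\Lambda^{-2}) = \sum_{i=1}^n C_{ii}\bigl(\alpha_i^2 + \alpha_i^{-2}\bigr) \ge 2\sum_{i=1}^n C_{ii} = \operatorname{Tr}(C) = 2\sum_{i=1}^n d_i(A),
\end{align*}
by AM--GM and the strict positivity $C_{ii} > 0$. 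Equality forces $\alpha_i = 1$ for every $i$, so $\Lambda = I$ and $M = O_1 O_2 \in \operatorname{OrSp}(2n)$; since $M$ is then orthogonal, $M^{-T} = M$, and setting $N \coloneqq M$ gives $A = N(D \oplus D) N^T$, as required.

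The main obstacle is the equality analysis of the trace inequality, and the Euler/Bloch--Messiah factorization is doing the decisive work by reducing the symplectic positive definite factor $M^{-1}M^{-T}$ to a diagonal ``$\alpha_i, \alpha_i^{-1}$'' form on which AM--GM is sharp. An alternative avoiding this factorization would write $W \coloneqq M^{-1}M^{-T} = e^Y$ with $Y$ symmetric and $JY + YJ = 0$; odd powers of $Y$ then anticommute with $J$ and hence have vanishing trace against $D \oplus D$, so the identity collapses to $\operatorname{Tr}\bigl((D \oplus D)(\cosh Y - I)\bigr) = 0$, which forces $Y = 0$ because $\cosh Y - I \succeq 0$ and $D \oplus D \succ 0$.
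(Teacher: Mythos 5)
Your proposal is correct, but it follows a genuinely different route from the paper. The paper never collapses the majorization to a scalar condition: it writes $A = N(D\oplus D)N^T$ with $N$ symplectic, shows $\Delta_c(A) = \widetilde N\, d_s(A)$ where $\widetilde N_{ij} = \tfrac12(p_{ij}^2+q_{ij}^2+r_{ij}^2+s_{ij}^2)$, invokes Theorem~6 of \cite{bhatia2015symplectic} to see that $\widetilde N$ is doubly superstochastic and is doubly stochastic precisely when $N$ is orthosymplectic, and then uses the equality of total sums built into $\prec$ to force $\widetilde N$ to coincide with a dominated doubly stochastic matrix. You instead exploit the already-established weak supermajorization \eqref{eqn18} to reduce the entire statement to the scalar trace identity $\tfrac12\operatorname{Tr}(A) = \sum_i d_i(A)$, and then analyze that equality through the Euler (Bloch--Messiah) decomposition $M = O_1\Lambda O_2$ of the Williamson factor and AM--GM, or alternatively via the logarithm of the symmetric positive definite symplectic matrix $W = M^{-1}M^{-T}$. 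Both arguments are sound; the details of your equality analysis check out (in particular, $O_2$ commuting with $J$ does force $C_{ii}=C_{n+i,n+i}$, and the AM--GM equality case pins $\Lambda = I$). What each approach buys: yours isolates a clean necessary-and-sufficient trace identity of independent interest and avoids the doubly superstochastic machinery, at the cost of importing the Euler decomposition (and relying on \eqref{eqn18} as a black box); the paper's argument is self-contained modulo the Bhatia--Jain characterization of orthosymplectic matrices in terms of doubly stochastic $\widetilde N$, and stays entirely within the majorization framework it sets up.
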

\begin{proof}
    By Williamson's theorem, there exists a symplectic matrix $M \in \operatorname{Sp}(2n)$ such that $A=M^{-T}(D \oplus D)(M^{-T})^T$. 
    Set $N \coloneqq M^{-T}$, and write this matrix in the block form
    \begin{align}
    N =   
        \begin{pmatrix}
            	P & Q \\
            	R & S
        \end{pmatrix},
    \end{align}
    where $P,Q,R,S$ are $n \times n$ blocks with $(i,j)$th entries $p_{ij}, q_{ij}, r_{ij}, s_{ij}$ respectively. 
    We thus get
    \begin{align}
    A = N(D \oplus D)N^T  
    	&= 
	\begin{pmatrix}
        		PDP^T+ QDQ^T & PDR^T+QDS^T \\
        		RDP^T+SDQ^T & RDR^T + SDS^T
    	\end{pmatrix}.
    \end{align}
    This implies
    \begin{align}
    \Delta_c(A) 
        &= \dfrac{1}{2} \left[\Delta(PDP^T)+\Delta(QDQ^T)+\Delta(RDR^T)+\Delta(SDS^T) \right] \\
        &= \dfrac{1}{2} \left[ (P\circ P) d_s(A)+(Q\circ Q) d_s(A)+(R\circ R) d_s(A)+(S\circ S) d_s(A) \right] \\
        &=\dfrac{1}{2} \left[ P\circ P+Q\circ Q+R\circ R+S\circ S  \right]d_s(A)\\
        &=\widetilde{N} d_s(A), \label{am_sym_diagonal_dss}
    \end{align}
    where $\circ$ denotes the Hadamard (entry-wise) product of matrices, and $\widetilde{N}$ is the $n\times n$ matrix with $(i,j)$th entry given by
    \begin{align}\label{eq:dss-associated-with-N-tilde}
        \widetilde{N}_{ij}=\dfrac{p_{ij}^2+q_{ij}^2+r_{ij}^2+ s_{ij}^2}{2}, \qquad i,j=1,\ldots,n.
    \end{align}
    Since the symplectic group is closed under matrix inverse and transpose, we have $N \in\operatorname{Sp}(2n)$. 
    It then follows by Theorem~6 of \cite{bhatia2015symplectic} that $\widetilde{N}$ is a doubly superstochastic matrix. 
    By the definition of doubly superstochastic matrices, there exists an $n\times n$ doubly stochastic matrix $E$ whose $(i,j)$th entry satisfies $E_{ij} \leq \widetilde{N}_{ij}$ for all $i,j=1,\ldots, n$. 
    Also, from \eqref{am_sym_diagonal_dss}, we have
    \begin{align}\label{eq:sym-ds-dss}
    (\widetilde{N}-E)d_s(A)= \Delta_c(A) - E d_s(A).
    \end{align}

    Assume that $\Delta_c(A) \prec d_s(A)$.
    By equating the sums of the elements of the vectors on both sides of \eqref{eq:sym-ds-dss}, we thus get
    \begin{align}
        \sum_{i=1}^n \sum_{j=1}^n (\widetilde{N}_{ij}-E_{ij}) d_j(A)=0.
    \end{align}
    Since $\widetilde{N}_{ij}-E_{ij} \geq 0$ and $d_j(A)>0$ for all $i,j \in \{1,\ldots, n\}$, we get $\widetilde{N}_{ij}=E_{ij}$ for all $i,j=1,\ldots,n$; i.e., $\widetilde{N}$ is a doubly stochastic matrix. 
    It then follows by Theorem~6 of \cite{bhatia2015symplectic} that $N$ is an orthosymplectic matrix.

    Conversely, assume that $A=N (D \oplus D)N^T$, where $N$ is a $2n \times 2n$ orthosymplectic matrix. 
    The matrix $N$ being orthosymplectic implies that the matrix $\widetilde{N}$ given by \eqref{eq:dss-associated-with-N-tilde} is doubly stochastic \cite[Theorem~6]{bhatia2015symplectic}. 
    It then follows from \eqref{am_sym_diagonal_dss} and Lemma~\ref{lemma:doubly-stochastic} that $\Delta_c(A) \prec d_s(A)$.
\end{proof}


\section{Summary}
    We established necessary and sufficient conditions for equality in various symplectic eigenvalue inequalities such as symplectic Weyl's inequalities, symplectic Lidskii's inequalities, and a symplectic Schur--Horn weak supermajorization inequality.
    Another interesting consequence of our analysis is that a symplectic analog of the well-known fact about Hermitian matrices and their invariant subspaces: if $\mathscr{U}$ is a symplectic subspace invariant under $JA$ then it has a symplectic basis consisting of symplectic eigenvector pairs of $A$.


\begin{ack}
    I am grateful to Prof.~Tanvi Jain for the encouragement and helpful suggestions that made this project possible. 
    I  thank Prof.~Shmuel Friedland for pointing me to his book which was helpful in finishing the paper. 
    I am indebted to Prof. Mark Wilde for his immense support and kindness. 
    I thank Komal Malik for her help in resolving a technical issue in the paper.\end{ack}

\begin{funding}
    This research was supported by the National Science Foundation under
Grant No.~2304816
\end{funding}


\end{document}